\documentclass[reqno]{amsart}

\usepackage{amsmath,amssymb,color,amsthm}
\usepackage{enumerate,enumitem}
\usepackage{booktabs,tabularx,multirow}
\usepackage{graphicx}
\usepackage{verbatim}
\usepackage{cite}
\usepackage[margin=1.5in]{geometry}

\newtheorem{theorem}{Theorem}

\newtheorem{lemma}[theorem]{Lemma}

\newtheorem{corollary}[theorem]{Corollary}

\newtheorem{proposition}[theorem]{Proposition}

\newtheorem*{claim*}{Claim}

\newtheorem{observation}[theorem]{Observation}

\newtheorem{prob}{Problem}

\newtheoremstyle{definition}
 {4pt}
 {4pt}
 {\sl}
 {}
 {\bfseries}
 {.}
 {.5em}
 {}

\theoremstyle{definition}
\newtheorem{definition}[theorem]{Definition}

\theoremstyle{remark}

\newtheorem*{remark*}{Remark}

\newtheoremstyle{introthms}
 {3pt}
 {3pt}
 {\itshape}
 {}
 {\bfseries}
 {.}
 {.5em}
 {\thmnote{#3}}
\theoremstyle{introthms}



\begin{document}
\title{Online and size anti-Ramsey numbers}

\author[M. Axenovich]{Maria Axenovich}
\address{Karlsruher Institut f\"ur Technologie, Karlsruhe, Germany}
\email{maria.aksenovich@kit.edu}

\author[K. Knauer]{Kolja Knauer}\thanks{The second author was supported by the ANR TEOMATRO grant ANR-10-BLAN 0207}
\address{Universit\'e Montpellier 2, Montpellier, France}
\email{kolja.knauer@math.univ-montp2.fr}

\author[J. Stumpp]{Judith Stumpp}
\address{Karlsruher Institut f\"ur Technologie, Karlsruhe, Germany}

\author[T. Ueckerdt]{Torsten Ueckerdt}
\address{Karlsruher Institut f\"ur Technologie, Karlsruhe, Germany}
\email{torsten.ueckerdt@kit.edu}

\date{\today}
 
\begin{abstract}
 A graph is properly edge-colored if no two adjacent edges have the same color. The smallest number of edges in a graph any of whose proper edge colorings contains a totally multicolored copy of a graph $H$ is the {\bf size anti-Ramsey number} $AR_s(H)$ of $H$. This number in offline and online setting is investigated here.
\end{abstract}

\maketitle

\emph{Keywords}: Coloring, anti-Ramsey, rainbow, totally multicolored, proper coloring, online, Ramsey, size-Ramsey.

\section{Introduction}

In the coloring problems of this paper we assume that the edges of graphs are colored with  natural numbers. A graph is \emph{rainbow} or \emph{totally multicolored} if all its edges have distinct colors. For a graph $H$, let $AR(n,H)$, the {\bf classical anti-Ramsey number}, be the largest number of colors used on the edges of a complete $n$-vertex graph without containing a rainbow copy of $H$. This function was introduced by Erd\H{o}s, Simonovits and S\'os~\cite{ESS} and it is shown to be finite. In particular, having sufficiently many colors on the edges of a complete graph forces the existence of a rainbow copy of $H$ and this number of colors is closely related to the Tur\'an or extremal number for $H$. 
%

An important and natural class of colorings consists of {\it proper colorings}. A proper coloring of a graph assigns distinct colors to adjacent edges. Here, we are concerned with the most efficient way to {\bf force a rainbow copy} of $H$ in properly colored graphs. It is clear that to force a rainbow copy of a graph one does not necessarily need to use a complete graph to be colored. For that, we investigate the smallest number of edges in a graph such that any proper coloring contains a rainbow copy of $H$. We introduce a size and online version of anti-Ramsey numbers for graphs, denoted $AR_s$ and $AR_o$, respectively.  
%
%
Note that these very natural variants were studied for Ramsey numbers, see~\cite{C,EFRS,KK,EF92}, to name a few. There are interesting connections between classical, size and online Ramsey numbers. However, in an anti-Ramsey setting, these functions were not considered so far. This paper initiates this study.

\smallskip

Formally, we define the {\bf size anti-Ramsey number}, denoted by $AR_s(H)$, to be the smallest number of edges in a graph $G$ such that any of its proper edge-colorings contains a rainbow copy of $H$.
The function $AR_o(H)$, the {\bf online anti-Ramsey number}, is defined in the following game setting. There are two players, Builder and Painter. Builder creates vertices and edges, one edge at a time. After each new edge has been built by Builder, Painter assigns a color to it such that no two adjacent edges have the same color. Both, Builder and Painter have complete information, that is, at all times the so-far built graph and its coloring is known to both players. Builder wins after $t$ steps if after the $t$-th edge has been built and colored there is a rainbow copy of a graph $H$. We shall show below that Builder can always win and the smallest such $t$ is denoted by $AR_o(H)$.

In studying size and online anti-Ramsey numbers, we make use of two additional functions: $AR_{FF}$ and $AR_{loc}$. Here, $AR_{FF}(H)$ is the {\bf First-Fit anti-Ramsey number} and it is defined as $AR_o(H)$ with the exception that Painter must follow the First-Fit strategy, i.e., Painter must use the smallest feasible color. In addition, $AR_{loc}(H)$ is the {\bf local anti-Ramsey number}, the smallest $n$ such that any proper edge coloring of $K_n$ contains a rainbow copy of $H$, see for example Babai~\cite{B}, Alon, Lefmann, R{\"o}dl~\cite{ALR}. In all the above anti-Ramsey numbers we consider only simple graphs unless stated differently. 

We observe an easy chain of inequalities for graphs $H$ without isolated vertices, which we prove formally in Theorem~\ref{thm:inequalities}:
\begin{equation}\label{eq:inequalities}
 \frac{AR_{loc}(H)^{1/3}}{2} \leq AR_{FF}(H)\leq  AR_o(H) \leq  AR_s(H) \leq  \binom{AR_{loc}(H)}{2}.
\end{equation}
%


\noindent
\textbf{Our Results.}
We provide bounds on $AR_s(H), AR_o(H)$ and $AR_{FF}(H)$ for general graphs $H$, as well as for some specific graphs including the complete graph $K_n$ on $n$ vertices, the path $P_k$ on $k$ edges, the cycle  $C_k$ on $k$ edges, and the matching $M_k$ on $k$ edges. For the terminology used in this paper we refer to the standard textbook of West~\cite{W}.

We summarize our main results in Table~\ref{tab:overview} and provide the corresponding theorems in respective sections.

\begin{table}[h!]
 \renewcommand{\arraystretch}{1.25}
 \newcolumntype{C}{>{\centering\arraybackslash}X}%
 \begin{tabularx}{\textwidth}{cCCC}
  \toprule
  $H$ & $AR_{FF}(H)$ & $AR_o(H)$ & $AR_s(H)$ \\
  \midrule
  \multirow{3}{*}{\begin{minipage}{6em}
          \centering
          $n$-vertex\\ $k$-edge graph
         \end{minipage}} 
  & \multirow{3}{*}{\begin{minipage}{6em}
          \centering
          $\leq (\tau+1)\cdot k$ \\[5pt] $\geq \displaystyle\frac{k^2}{4(\Delta+1)}$
         \end{minipage}} 
  & \multirow{3}{*}{$\leq k^2$} 
  & $\leq 8k^4/n^2$
  \\
  & & & $\leq \frac{k(k+1)}{2}^*$
  \\[2pt]
  & & & $\geq \frac{(k-1)^2}{2\Delta}$
  \\
  \midrule
  \multirow{3}{*}{$P_k$} 
  & \multirow{3}{*}{$\displaystyle\frac{k^2}{8}(1 + o(1))$} 
  & \multirow{3}{*}{$\displaystyle\leq \frac{k^2}{2}$} 
  & {$\leq \frac{8}{9}k^2$~\cite{GM}}
  \\
  & & & {$\leq \frac{k^2}{2}^*$} 
  \\[2pt]
  & & & {$\geq \frac{k(k+1)}{4}$} 
  \\
  \midrule
  \multirow{3}{*}{$C_k$} & \multirow{3}{*}{$\displaystyle\frac{k^2}{8}(1 + o(1))$} 
  & \multirow{3}{*}{$\leq \displaystyle\frac{k^2}{2}$}
  & {$\leq k^2$}
  \\
  & & & {$\leq \frac{k^2}{2}^*$} 
  \\[2pt]
  & & & {$\geq \frac{k(k-1)}{4}$} 
  \\
  \midrule
  \multirow{2}{*}{$M_k$} 
  & \multirow{2}{*}{$\displaystyle\left\lceil\frac{k^2}{4}\right\rceil$}
  & \multirow{2}{*}{$\displaystyle\leq \frac{k^2}{3}(1 + o(1))^*$} 
  & \multirow{2}{*}{$\displaystyle\frac{k(k+1)}{2}$} 
  \\
  & & & 
  \\
  \midrule
  \multirow{2}{*}{$K_n$}
  & \multirow{2}{*}{$\displaystyle\frac{n^3}{6}(1 + o(1))$} 
  & \multirow{2}{*}{$\displaystyle\leq \frac{n^4}{4}$} 
  & $\leq c n^6/ \log^2 n$~\cite{AJMP}
  \\
  & & & $\geq c n^5/ \log n$
  \\
  \bottomrule
 \end{tabularx}
 \vspace{0.5em}
 \caption{Overview over online, First-Fit and size anti-Ramsey numbers for paths, cycles, matchings, and complete graphs. Upper bounds marked with $^*$ are derived from graphs $G$ with multiple edges that enforce rainbow $H$. Here $c$ denotes an absolute positive constant, $\tau$ the vertex cover number and $\Delta$ the maximum degree of $H$.}
 \label{tab:overview}
\end{table}

\noindent
\textbf{Related Work.}
%
%
The anti-Ramsey numbers and their variations were studied in a number of papers, see~\cite{FMO} for a survey. The problem of finding rainbow subgraphs in properly-colored graphs was originally motivated by the transversals of Latin squares,~\cite{R}. The largest number of edges in a graph on $n$ vertices that has a proper coloring without rainbow $H$, referred as rainbow Tur\'an number, was studied by Das, Lee and Sudakov~\cite{DLS} and Keevash, Mubayi, Sudakov and Verstra{\"e}te~\cite{KMSV}. Clearly, we have $AR_s(H) \leq {\rm ex *}(H) +1$ since if a graph $G$ has ${\rm ex *}(H) +1$ edges, then any (!) of its proper edge-colorings will contain a rainbow copy of $H$. However, $AR_s(H)$ can be arbitrarily smaller than $\rm{ex*}(n, H)$ when $n$ is not fixed. To see this, consider $H=M_2$, a matching with two edges. Any star does not contain $M_2$, so $\rm{ex*}(n, H)\geq n-1$, however, $AR_s(H)=3$ as we shall see in Section~\ref{sec:size}. So, these two functions are completely different in nature.

Finding rainbow copies of a given graph in non-necessarily properly colored graph was investigated as well. 
The conditions on the colorings that force rainbow copies include large total number of colors, 
large number of colors incident to each vertex,  small number of edges in each color class,  bounded number of edges in each monochromatic star, see~\cite{ESS,AJT,AJMP,BKP}, to name a few. Among some perhaps  unexpected results on anti-Ramsey type numbers
is the fact that  rainbow cycles can be forced in properly colored graphs of arbitrarily high girth, see~\cite{RT} and 
that rainbow subgraphs in edge-colored graphs are related  to matroid theory, see~\cite{BL,T}. 

\smallskip

In this work, we are concerned with size and online anti-Ramsey numbers, not studied so far.  In comparison, 
the online Ramsey number has been introduced by Beck~\cite{Bc} in 1993 and independently by Friedgut \textit{et al.}~\cite{FKRRT} in 2003 and Kurek and Ruci{\'n}ski~\cite{KR} in 2005. Here Builder presents some graph one edge at a time, and Painter colors each edge as it appears either red or blue. The goal of Builder is to force a monochromatic copy of a fixed target graph $H$ and the goal of Painter is to avoid this as long as possible. The online Ramsey number of $H$ is then the minimum number of edges Builder has to present to force the desired monochromatic copy of $H$, no matter how Painter chooses to color the edges, see~\cite{C,GKP,P,GHK}.
 The size-Ramsey number  for a graph $H$ is the smallest number of edges in a graph $G$ such that any $2$-coloring of the edges of $G$ results in a monochromatic copy of $H$, 
see~\cite{EFRS, B83}.    One of the fundamental questions studied in size-Ramsey and online Ramsey  theory is how these numbers relate to each other and to the 
classical Ramsey numbers and how they are expressed in terms of certain graph parameters.

Here, we initiate the study of size- and online anti-Ramsey numbers answering similar questions:
\textit{``How do the various anti-Ramsey numbers relate to each other?''},   \textit{``What are the specific values for classes of graphs?''}, 
\textit{``What graph parameters play a determining role in studying these anti-Ramsey type functions?''}.

\smallskip

\noindent
\textbf{Organization of the Paper.}
 
\begin{itemize}

 \item{}In Section~\ref{sec:comparison} we prove the inequalities~\eqref{eq:inequalities}. We also investigate the relation between the local anti-Ramsey number $AR_{loc}$ and the size anti-Ramsey number $AR_s$. By recalling known upper bounds for $AR_{loc}$ we obtain some upper bounds for $AR_s$ using~\eqref{eq:inequalities}.
 
 \item{}In Section~\ref{sec:size} we only consider the size anti-Ramsey numbers. We prove upper and lower bounds on $AR_s(K_n)$, provide general bounds on $AR_s(H)$ in terms of the chromatic index $\chi'(H)$ and the vertex cover number $\tau(H)$. We give upper and lower bounds for $AR_s(P_k)$ and $AR_s(C_k)$ and determine $AR_s(M_k)$ exactly for every $k$. Moreover, we show how rainbow copies can be forced using fewer edges if multiple edges are allowed.
 
 \item{}In Section~\ref{sec:first-fit} we consider the First-Fit anti-Ramsey number $AR_{FF}$. We show that it is determined up to a factor of $4$ by a certain matching parameter of $H$ and prove asymptotically tight bounds for paths, cycles, matchings and complete graphs.
 
 \item{}In Section~\ref{sec:online}  we consider the online anti-Ramsey number $AR_o$. We present a strategy for Builder that gives a general upper bound on $AR_o(H)$ in terms of $|E(H)|$. We also show how to improve this bound in case when $H$ is a matching.
  
\end{itemize}

We drop floors and ceilings when appropriate   not  to congest the presentation. We also assume without loss of generality that in all online settings the graph $H$ that is to be forced has no isolated vertices.

\section{Comparison of Anti-Ramsey Numbers}\label{sec:comparison}

In this section we investigate the relation between the four concepts of anti-Ramsey numbers: First-Fit anti-Ramsey number $AR_{FF}$, online anti-Ramsey number $AR_o$, size anti-Ramsey
number $AR_s$ and local anti-Ramsey number $AR_{loc}$. We present several immediate inequalities that are valid for all (non-degenerate) graphs $H$, and conclude upper bounds on the new anti-Ramsey numbers for complete graphs, general graphs, paths, cycles and matchings.
We shall write $G\rightarrow H$ if every proper edge-coloring of $G$ contains a rainbow copy of $H$.

\begin{theorem}\label{thm:inequalities}
 For any graph $H$ without isolated vertices we have
 \begin{itemize}
  \itemsep6pt
  \item $\frac{AR_{loc}(H)^{1/3}}{2} \leq AR_{FF}(H)\leq AR_o(H) \leq AR_s(H) \leq \binom{AR_{loc}(H)}{2}$
  \item $AR_s(H) \geq \frac{1}{2} AR_{loc}(H) \delta(H),$
 \end{itemize}
 where $\delta(H)$ denotes the minimum degree of $H$.
\end{theorem}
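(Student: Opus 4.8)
The plan is to prove the five‑term chain from right to left and then the minimum‑degree bound separately. The three rightmost comparisons are immediate from the definitions. For $AR_{FF}(H)\le AR_o(H)$: a Builder strategy witnessing $AR_o(H)$ works against \emph{every} Painter, in particular against the deterministic First‑Fit Painter, so it also wins the First‑Fit game within $AR_o(H)$ moves. For $AR_o(H)\le AR_s(H)$: if $G\to H$ with $|E(G)|=AR_s(H)$, Builder simply presents the edges of $G$ in an arbitrary order, and whatever proper coloring Painter produces contains a rainbow $H$ by definition of $G\to H$. For $AR_s(H)\le\binom{AR_{loc}(H)}{2}$: $K_{AR_{loc}(H)}\to H$ and it has $\binom{AR_{loc}(H)}{2}$ edges.

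The one substantive inequality, $\tfrac12 AR_{loc}(H)^{1/3}\le AR_{FF}(H)$, I would deduce from the extremal bound
\[
  AR_{loc}(H)\ \le\ |V(H)| + |E(H)|\cdot\Delta(H)\ \le\ |V(H)|^{3}
\]
together with the trivial remark that Builder cannot create any copy of $H$ before $|E(H)|$ edges have been built, so $|E(H)|\le AR_{FF}(H)$ and hence (as $H$ has no isolated vertices) $|V(H)|\le 2|E(H)|\le 2AR_{FF}(H)$; combining, $AR_{loc}(H)\le\big(2AR_{FF}(H)\big)^{3}$. For the extremal bound I would fix any proper edge‑coloring $c$ of $K_N$ with $N=|V(H)|+|E(H)|\Delta(H)$ and build a copy of $H$ greedily, one vertex at a time, keeping the already‑embedded part of $H$ rainbow. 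When placing the next vertex we must avoid the $<|V(H)|$ vertices used so far and, for each of its $\le\Delta(H)$ already‑embedded neighbours, the at most $|E(H)|$ vertices $w$ for which $c$ gives the corresponding edge a color already appearing on the embedded part — at most $|E(H)|$ of them, since $c$ is proper at that neighbour. As $N$ exceeds the total number of forbidden vertices, a valid choice always exists, and the finished embedding is a rainbow $H$; thus $K_N\to H$.

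For the minimum‑degree bound $AR_s(H)\ge\tfrac12 AR_{loc}(H)\,\delta(H)$, I would take $G$ with $G\to H$ and $|E(G)|=AR_s(H)$ and pass to the subgraph $G^{*}$ obtained by repeatedly deleting a vertex whose current degree is less than $\delta(H)$. The key point is that deleting such a vertex $v$ preserves the property that every proper coloring contains a rainbow $H$: extend a given proper coloring of $G-v$ to $G$ by assigning the $<\delta(H)$ edges at $v$ fresh colors; the rainbow $H$ that then appears cannot use $v$, since every vertex of $H$ has degree $\ge\delta(H)$, so it already lies in $G-v$ and is rainbow there. Iterating, $G^{*}\to H$ and $\delta(G^{*})\ge\delta(H)$. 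The first fact forces $|V(G^{*})|\ge AR_{loc}(H)$ — otherwise restricting a proper coloring of $K_{|V(G^{*})|}$ with no rainbow $H$ to $G^{*}$ would contradict $G^{*}\to H$ — so $AR_s(H)=|E(G)|\ge|E(G^{*})|\ge\tfrac12\delta(G^{*})\,|V(G^{*})|\ge\tfrac12\delta(H)\,AR_{loc}(H)$.

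The only step that needs an actual argument rather than unwinding definitions is the greedy embedding; the mild care there is choosing the vertex budget so that the polynomial bound comes out as a clean cube, which uses only $|E(H)|\le\binom{|V(H)|}{2}$ and $\Delta(H)\le|V(H)|-1$. I expect the observation that deleting low‑degree vertices preserves $G\to H$ to be the one most easily overlooked, but it too is short once noticed.
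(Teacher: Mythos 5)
Your proof is correct, and for everything except the leftmost inequality it follows essentially the same route as the paper: the three rightmost comparisons are unwound from the definitions exactly as in the paper, and the minimum-degree bound is obtained by passing to a subgraph $G^{*}$ with $G^{*}\to H$, $\delta(G^{*})\ge\delta(H)$ and $|V(G^{*})|\ge AR_{loc}(H)$. (The paper gets $\delta(G)\ge\delta(H)$ in one step from edge-minimality of $G$ rather than by an iterated deletion process, but the underlying observation is the same one you isolate, namely that no copy of $H$ can use a vertex of degree less than $\delta(H)$; your formulation is if anything a bit more carefully justified.) The one genuine divergence is in proving $\tfrac12 AR_{loc}(H)^{1/3}\le AR_{FF}(H)$: the paper simply imports the bound $AR_{loc}(H)\le |V(H)|^{3}$ from Alon et al.~\cite{AJMP} and combines it with $AR_{FF}(H)\ge |E(H)|\ge |V(H)|/2$, whereas you prove the cubic bound from scratch via a greedy rainbow embedding into $K_{N}$ with $N=|V(H)|+|E(H)|\Delta(H)$. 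That embedding argument is sound --- properness of $c$ at each already-embedded neighbour limits the color-forbidden candidates to at most $|E(H)|$ per neighbour, so fewer than $N$ vertices are excluded in total --- though you should state explicitly that the several new edges incident to the freshly placed vertex receive pairwise distinct colors because they share that vertex and $c$ is proper; without this remark the claim that the finished embedding is rainbow is not quite complete. The payoff of your version is a self-contained proof where the paper relies on an external result; the paper's citation, on the other hand, gives access to the sharper bound $AR_{loc}(K_n)\le cn^{3}/\log n$, which it uses to note the slightly stronger inequality $AR_{FF}(H)\ge C\bigl(AR_{loc}(H)\log AR_{loc}(H)\bigr)^{1/3}$.
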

\begin{proof}
For the first inequality note that $\delta(H) \geq 1$ implies $|E(H)| \geq |V(H)|/2$. Then clearly $AR_{FF}(H) \geq |E(H)| \geq |V(H)|/2$. On the other hand, Alon~\textit{et al.}~\cite{AJMP} proved that $AR_{loc}(H) \leq |V(H)|^3$, which proves the claimed inequality. We remark that taking the better upper bound $AR_{loc}(K_n) \leq cn^3/\log(n)$ from~\cite{AJMP} we get the slightly better bound $AR_{FF}(H) \geq C(AR_{loc}(H) \log AR_{loc}(H))^{1/3}$ for some constant $C > 0$.

 To see the second inequality observe that Builder has an advantage if she knows the strategy of Painter. More precisely, if for \emph{every} strategy of Painter Builder can create a graph $G$ and an order of the edges of $G$ that forces a rainbow $H$, in particular she can do so for First-Fit. Hence $AR_{FF}(H) \leq AR_o(H)$.

For the third inequality consider $G$ on $AR_s(H)$ edges such that $G \rightarrow H$. Clearly, if Builder exposes the edges of $G$ in any order, then every coloring strategy of Painter will produce a proper coloring of $G$ and thus a rainbow $H$. In other words, $AR_o(H) \leq AR_s(H)$.

For the last inequality, recall that $AR_s(H)$ is the minimum number of edges in any graph $G$ with $G \rightarrow H$. Since  $G$ could always be taken as a complete graph,   $AR_s(H) \leq \binom{AR_{loc}(H)}{2}$.

 \smallskip
 
 For the inequality in the second item let $G\rightarrow H$ and $G$ have the smallest number of edges with that property. Then $\delta(G) \geq \delta(H)$. Otherwise, if $v$ was a vertex of $G$ with degree less than $\delta(H)$, coloring $G-v$ properly without a rainbow copy of $H$ and coloring the edges incident to $v$ arbitrarily and properly gives a coloring of $G$ without a rainbow copy of $H$. Observe also that $|V(G)|\ge AR_{loc}(H)$ because otherwise there is a complete graph on $AR_{loc}(H)-1$ vertices all of its proper colorings contain a rainbow $H$. Together with the lower bound on the degree this yields the claim.
\end{proof}

\begin{remark*}
All of the inequalities in Theorem~\ref{thm:inequalities} except for the very first one are also valid if $H$ contains isolated vertices. The first inequality is not valid in this case because $AR_{FF}(H)$ measures the number of edges in a graph $G$ containing rainbow $H$ and $AR_{loc}(H)$ measures the number of vertices in $G$ containing rainbow $H$. So when $H = I_n$ is an $n$-element independent set, then clearly $AR_{FF}(I_n) = 0$ while $AR_{loc}(I_n) = n$. 
\end{remark*}

The following proposition shows that $AR_s(H)$ can differ significantly from $\binom{AR_{loc}(H)}{2}$.

\begin{proposition}\label{prop:n=4}
 $$AR_s(K_4) = 15 \quad \text{and} \quad \binom{AR_{loc}(K_4)}{2}=\binom{7}{2} = 21.$$
\end{proposition}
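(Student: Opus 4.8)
The plan is to establish the two equalities separately, and to prove matching upper and lower bounds for $AR_s(K_4)$. The easy half is $\binom{AR_{loc}(K_4)}{2}=\binom72=21$, which just amounts to $AR_{loc}(K_4)=7$. For $AR_{loc}(K_4)>6$ I would take a $1$-factorization of $K_6$ into $5$ perfect matchings and color each factor with its own color: this is a proper edge-coloring using only $5$ colors, and a rainbow $K_4$ needs $6$, so there is none. For $AR_{loc}(K_4)\le 7$ I would argue by counting: in an arbitrary proper coloring of $K_7$ every color class is a matching of size $s\le 3$, and it can make a $4$-set non-rainbow only when two of its edges span that $4$-set; a matching of size $s$ has $\binom s2\le s$ pairs of edges, so it spoils at most $s$ of the four-sets, and summing over color classes the number of spoiled four-sets is at most $|E(K_7)|=21<35=\binom74$. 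Hence some $4$-set spans a rainbow $K_4$.

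For $AR_s(K_4)\le 15$ I would exhibit the graph $G=K_3\vee\overline{K_4}$: a triangle $\{x,y,z\}$ joined completely to an independent set $\{1,2,3,4\}$, so $|E(G)|=3+12=15$. Every $K_4$ in $G$ meets the independent set in at most one vertex, hence contains all of $x,y,z$, so the copies of $K_4$ are exactly $Q_i=\{x,y,z,i\}$, $i\in\{1,2,3,4\}$. In a proper coloring $\phi$, the edge $ix$ is adjacent to $xy$ and $xz$ (and symmetrically for $iy,iz$), so $Q_i$ can be non-rainbow only through one of the coincidences $\phi(ix)=\phi(yz)$, $\phi(iy)=\phi(xz)$, $\phi(iz)=\phi(xy)$. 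Since $1x,2x,3x,4x$ are pairwise adjacent they receive four distinct colors, so at most one of them equals $\phi(yz)$; likewise for the other two coincidence types. Therefore at most three of the $Q_i$ are non-rainbow, one is rainbow, and $G\to K_4$.

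For $AR_s(K_4)\ge 15$ I would take an edge-minimal $G$ with $G\to K_4$ and suppose $|E(G)|\le 14$ for contradiction. A chain of standard reductions shrinks the candidates sharply: repeatedly deleting a vertex of degree $\le 2$ (which lies in no $K_4$) and re-extending the coloring with fresh colors shows we may assume $\delta(G)\ge 3$; a disconnected example has a connected forcing component, so $G$ is connected; if $|V(G)|\le 6$ then $G\subseteq K_6$ inherits the $5$-coloring above and is not forcing, so $|V(G)|\ge 7$; a proper coloring with the fewest colors has no rainbow $K_4$ unless it uses at least $6$ colors, so $\chi'(G)\ge 6$ and hence $\Delta(G)\ge 5$ by Vizing; and minimality gives $G-v\not\to K_4$ and $G-e\not\to K_4$ for every vertex $v$ and edge $e$. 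Now $\delta(G)\ge 3$, $|V(G)|\ge 7$ and $|E(G)|\le 14$ force $|V(G)|\le 9$, and if $|V(G)|=9$ the degree sum is at most $28$, so at most one degree exceeds $3$ and $\Delta(G)\le 4$, contradicting $\Delta(G)\ge 5$. Thus $|V(G)|\in\{7,8\}$ and $|E(G)|\in\{11,\dots,14\}$.

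To finish I would run a finite case check over this bounded family, using two facts. First, if $G$ had only one copy of $K_4$ then $G\not\to K_4$: two of its six edges are independent, so we may recolor them with one common new color and properly extend, destroying that $K_4$; hence $G$ must contain several copies of $K_4$. Second, fixing a vertex $v$ with $\deg(v)=\Delta(G)\ge 5$ and a proper coloring $\psi$ of $G-v$ with no rainbow $K_4$ (which exists since $G-v\not\to K_4$), extending $\psi$ to $G$ without creating a rainbow $K_4$ amounts to choosing distinct colors for the edges at $v$ — each avoiding the colors $\psi$ already uses at its far endpoint — so that every triangle $\{a,b,c\}\subseteq N(v)$ is "covered" in the sense that $\phi(va)=\psi(bc)$, $\phi(vb)=\psi(ca)$, or $\phi(vc)=\psi(ab)$. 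The same three-coincidence-types counting as in the upper bound controls how many triangles can simultaneously obstruct such a choice, and one checks that no graph in the surviving family is rich enough to obstruct — for instance, an edge of $G[N(v)]$ lying in four triangles would already force $|E(G)|\ge 15$, exactly as in the extremal example. I expect this case analysis — eliminating the finitely many graphs on $7$ or $8$ vertices with $\delta\ge 3$, $\Delta\ge 5$ and at most $14$ edges — to be the main obstacle and the bulk of the proof; the reductions preceding it are short.
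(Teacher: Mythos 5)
Your computation of $AR_{loc}(K_4)=7$ is correct, and your counting argument for $AR_{loc}(K_4)\le 7$ (each color class in $K_7$ is a matching of size $s\le 3$, spoiling at most $\binom{s}{2}\le s$ four-sets, and $\sum_c s_c=21<35$) is a clean alternative to the paper, which instead deduces the bound from its $15$-edge construction. Your upper bound $AR_s(K_4)\le 15$ uses exactly the paper's graph $K_3\vee\overline{K_4}$, and your "three coincidence types" argument is equivalent to the paper's count of at most three $X$--$Y$ edges carrying triangle colors. All of this is fine.

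The genuine gap is the lower bound $AR_s(K_4)\ge 15$, which is the substantive half of the proposition. Your reductions ($\delta(G)\ge 3$, $|V(G)|\ge 7$, $\chi'(G)\ge 6$ hence $\Delta(G)\ge 5$, hence $|V(G)|\in\{7,8\}$ and $|E(G)|\le 14$) are all correct, but they only shrink the problem to a finite family that you then do not eliminate: you explicitly write that the case check is "the main obstacle and the bulk of the proof" and offer only heuristics ("one checks that no graph in the surviving family is rich enough to obstruct"). That family is not small, and nothing in your sketch shows how the two tools you name (uniqueness of a $K_4$ implies non-forcing; the triangle-covering condition at a maximum-degree vertex) actually dispatch every case. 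As it stands the lower bound is asserted, not proved. For comparison, the paper avoids the enumeration entirely: it argues by induction (base $K_6$) that a minimal counterexample has $\delta\ge 3$ and every edge in at least two copies of $K_4$; that two copies of $K_4$ cannot share three vertices (else a degree count forces two adjacent degree-$3$ vertices whose common edge lies in at most one $K_4$); and that the intersection graph of $K_4$-copies (adjacent when sharing an edge) must have a component with at least three copies, which already accounts for $6+5+4=15$ edges. Strengthening your "unique $K_4$" observation to "every edge lies in at least two copies of $K_4$" and then analyzing how copies can overlap is the missing idea that replaces your open-ended case analysis.
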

\begin{proof}
 To see that $AR_{loc}(K_4)>6$, observe that $K_6$ can be properly colored with $5$ colors, but a rainbow $K_4$ requires $6$ colors. 
 Let $G$ be a graph on $7$ vertices, with vertex set $X\cup Y$,  $X$ induces a triangle, $Y$ induces an independent set on $4$ vertices and 
 $(X,Y)$ is a complete bipartite graph.  
Consider an arbitrary  proper coloring of $G$ and assume that $X$  induces color $1, 2, 3$. 
Since there are at most three edges of colors from $\{1, 2, 3\}$ between $X$ and $Y$, there is a vertex  $v\in Y$  sending edges of colors different from $1,2$, and $3$ to $x,y,z$.
 Thus $\{v, x, y, z\}$ induces a rainbow $K_4$. 
 So, we have that $G\rightarrow K_4$ and thus $AR_{loc}= 7$ and  $AR_s(K_4) \leq 15$.

 \smallskip

 To see that any graph with at most $14$ edges can be properly colored without a rainbow $K_4$, we use induction on the number of vertices and edges of $G$ with the basis 
 $|V(G)|=6$ shown above.  Now,  we assume that  $G$ has at least $7$ vertices and any of its subgraphs can be properly colored without rainbow $K_4$, however
 $G$ itself can not be properly colored avoiding rainbow $K_4$.
 It is easy to see that  $G$ has minimum degree at least $3$ and each edge belongs to at least $2$ copies of $K_4$. 
 Consider the copies of $K_4$ in $G$.  Assume first that there are two such copies sharing three vertices. Then together they span $9$ edges and 
 $5$ vertices. Thus, the remaining set of at least $2$ vertices is incident to at most $5$ edges.  Since these vertices are of degree at least $3$, there must be exactly $2$ of them, 
 they both must be of degree exactly $3$, and they must be adjacent. Thus, they belong to at most one copy of $K_4$, a contradiction.
 Thus, any two copies of $K_4$ share at most two vertices.  
 Let $K$ be the graph whose vertices are the copies of $K_4$ in $G$ and two vertices of $K$  are adjacent if and only if the corresponding copies share an edge.
 There is a component of $K$ with at least three vertices, otherwise we can easily color $G$ avoiding rainbow $K_4$.
 This component corresponds to at  least $6+ 5 + 4=15$ edges in $G$, a contradiction.
  \end{proof}

\section{Size Anti-Ramsey Numbers}\label{sec:size}

The inequalities in Theorem~\ref{thm:inequalities} together with known upper bounds on the local anti-Ramsey number $AR_{loc}$ immediately give upper bounds on $AR_s$, $AR_o$ and $AR_{FF}$. We shall first collect some known bounds on $AR_{loc}(H)$ for several graphs $H$, and then conclude corresponding upper bounds on $AR_s(H)$ in Corollary~\ref{cor:from-loc} below.

\smallskip

Alon~\textit{et al.}~\cite{AJMP} prove that there are absolute constants $c_1, c_2 >0$, such that
\begin{equation}
 c_1 n^3/\log n \leq AR_{loc}(K_n) \leq c_2 n^3/\log n.\label{eq:local-complete}
\end{equation}
See also earlier papers of Alon, Lefmann, R{\"o}dl~\cite{ALR} and Babai~\cite{B}. In the same paper,~\cite{AJMP}, the authors prove that for any graph $H$ on $n$ vertices and $k \geq 2$ edges
$$k-1 \leq AR_{loc}(H) \quad \text{and} \quad \frac{ck^2}{n \log n} \leq AR_{loc}(H) \leq 4k^2/n \quad \text{for some $c>0$}.$$
Here, the upper bound is obtained by a greedy counting and the lower bound is given by a probabilistic argument.

The local anti-Ramsey number for the path $P_k$ on $k$ edges was investigated by Gy\'arf\'as \textit{et al.}~\cite{GRSS} and was conjectured to be equal to $k+c$ for a small constant $c$. Indeed it seems to be widely believed that it is $k+2$ (see e.g.~\cite{GM}), which would be best-possible since Maamoun and Meyniel~\cite{MM} show that for $n$ being a power of $2$ there are proper colorings of $K_n$ admitting no Hamiltonian rainbow path. Frank Mousset gave the best currently known upper bound in his bachelor's thesis~\cite{GM}, so that together we have
$k+2 \leq AR_{loc}(P_k) \leq \frac{4}{3} k + o(k).$
Gy\'arf\'as \textit{et al.}~\cite{GRSS} considered the cycle $C_k$ on $k$ edges and showed that 
$k+1 \leq AR_{loc}(C_k) \leq \frac{7}{4} k +o(k).$
For a matching $M_k$ on $k$ edges, Kostochka and Yancey~\cite{KY}, see also Li, Xu~\cite{LX1}, 
proved that $AR_{loc}(M_k) = 2k.$

From these bounds and the inequality $AR_s(H) \leq \binom{AR_{loc}(H)}{2}$ for all graphs $H$ from Theorem~\ref{thm:inequalities} we immediately get the following.

\begin{corollary}\label{cor:from-loc}
For an $n$-vertex $k$-edge graph $H$, $AR_s(H) \leq 8k^4/n^2$. 
Moreover,  $AR_s(K_n) \leq cn^6/\log^2 n$,  ~$~~AR_s(P_k) \leq \binom{4k/3}{2} + o(k^2)$,  $ ~~AR_s(C_k) \leq \binom{7k/4}{2} + o(k^2)$, and 
$ ~~AR_s(M_k) \leq 2k^2.$
\end{corollary}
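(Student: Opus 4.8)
The plan is simply to invoke the inequality $AR_s(H) \leq \binom{AR_{loc}(H)}{2}$ from Theorem~\ref{thm:inequalities} and substitute each of the known upper bounds on $AR_{loc}$ collected above. Since $\binom{m}{2}$ is monotone increasing in $m$ and satisfies $\binom{m}{2} \le m^2/2$, every upper bound on $AR_{loc}(H)$ transfers to an upper bound on $AR_s(H)$ with essentially no loss, so the whole corollary is a sequence of direct substitutions.

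Concretely, for an $n$-vertex $k$-edge graph $H$ I would use $AR_{loc}(H) \le 4k^2/n$, giving $AR_s(H) \le \binom{4k^2/n}{2} \le \tfrac12(4k^2/n)^2 = 8k^4/n^2$. For the complete graph, plugging the upper bound $AR_{loc}(K_n) \le c_2 n^3/\log n$ from~\eqref{eq:local-complete} yields $AR_s(K_n) \le \binom{c_2 n^3/\log n}{2} \le \tfrac{c_2^2}{2}\, n^6/\log^2 n$, which is of the form $c n^6/\log^2 n$ with $c := c_2^2/2$. For paths and cycles I would take $AR_{loc}(P_k) \le \tfrac43 k + o(k)$ and $AR_{loc}(C_k) \le \tfrac74 k + o(k)$; expanding $\binom{(4/3+o(1))k}{2}$ and $\binom{(7/4+o(1))k}{2}$ as quadratics in $k$ shows the lower-order terms are absorbed into an $o(k^2)$ error, so $AR_s(P_k) \le \binom{4k/3}{2} + o(k^2)$ and $AR_s(C_k) \le \binom{7k/4}{2} + o(k^2)$. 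Finally, the exact value $AR_{loc}(M_k) = 2k$ gives $AR_s(M_k) \le \binom{2k}{2} = k(2k-1) \le 2k^2$.

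There is no genuine obstacle here: every inequality is obtained by feeding a previously stated bound into the already-proved estimate $AR_s \le \binom{AR_{loc}}{2}$. The only points deserving a line of care are the verification that $\binom{(\alpha + o(1))k}{2} = \binom{\alpha k}{2} + o(k^2)$ for the path and cycle cases (immediate from writing out the binomial coefficient), and fixing the absolute constant $c = c_2^2/2$ in the $K_n$ bound.
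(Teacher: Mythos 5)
Your proposal is correct and is exactly the paper's (implicit) argument: the paper states that the corollary follows ``immediately'' from the listed upper bounds on $AR_{loc}$ together with $AR_s(H)\le\binom{AR_{loc}(H)}{2}$ from Theorem~\ref{thm:inequalities}, which is precisely the substitution you carry out. Your explicit verifications (e.g.\ $\binom{4k^2/n}{2}\le 8k^4/n^2$ and $\binom{2k}{2}\le 2k^2$) are the only computations needed and they are all accurate.
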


Next in this section we improve some of these bounds.

\begin{theorem}
 There exists positive constants $c_1,c_2$ such that
 $$c_1n^5/ \log n \leq AR_s(K_n)\leq c_2 n^6/ \log^2 n.$$
\end{theorem}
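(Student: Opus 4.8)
The upper bound is not new: $AR_s(K_n)\le\binom{AR_{loc}(K_n)}{2}$ is the last inequality of Theorem~\ref{thm:inequalities}, and together with $AR_{loc}(K_n)\le c'n^3/\log n$ from~\cite{AJMP} it gives $AR_s(K_n)\le c_2n^6/\log^2n$; this is already recorded in Corollary~\ref{cor:from-loc}. So the whole content is the lower bound $AR_s(K_n)\ge c_1n^5/\log n$.

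Let $G$ be a graph with $G\rightarrow K_n$ chosen with $|E(G)|$ minimum; then every proper subgraph of $G$ has a proper edge-colouring with no rainbow $K_n$. Exactly as in the proof of Theorem~\ref{thm:inequalities} one gets $\delta(G)\ge\delta(K_n)=n-1$ and $|V(G)|\ge AR_{loc}(K_n)$, but together these only yield $|E(G)|\ge\tfrac12(n-1)\,AR_{loc}(K_n)=\Omega(n^4/\log n)$, one factor of $n$ short. The plan to recover that factor is to show that $G$ has many vertices of degree of order $n^2$. Let $W$ be the set of vertices of $G$ of degree at least $\binom{n-1}{2}-1$. The key claim is $|W|\ge AR_{loc}(K_{n-1})$; granting it, $2|E(G)|\ge\sum_{w\in W}\deg_G(w)\ge|W|\bigl(\binom{n-1}{2}-1\bigr)$, and since $AR_{loc}(K_{n-1})\ge c(n-1)^3/\log(n-1)$ for an absolute $c>0$ by~\cite{AJMP}, this is $\Omega(n^3/\log n)\cdot\Omega(n^2)=\Omega(n^5/\log n)$, as required.

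It remains to prove $|W|\ge AR_{loc}(K_{n-1})$, and this is where the real work lies. Suppose instead $|W|<AR_{loc}(K_{n-1})$; I want to build a proper edge-colouring of $G$ with no rainbow $K_n$, contradicting $G\rightarrow K_n$. Colour $G[W]$ by the restriction of a proper colouring of $K_{|W|}$ with no rainbow $K_{n-1}$, and colour $G-W$, whose maximum degree is below $\binom{n-1}{2}$, by a proper colouring with fewer than $\binom{n-1}{2}$ colours (Vizing), on a disjoint palette. Any rainbow $K_n$ in the resulting partial picture with at least $n-1$ vertices in $W$ would contain a rainbow $K_{n-1}$ inside $W$, which is impossible; one with at least $n-1$ vertices outside $W$ would contain a $K_{n-1}$ drawn in fewer than $\binom{n-1}{2}$ colours, also impossible. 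The cliques that survive are the genuinely mixed ones, splitting into a clique of size $a$ in $W$ and one of size $b=n-a$ outside with $2\le a\le n-2$: a priori such a clique could be assembled from a rainbow $K_a$ in $W$, a rainbow $K_b$ outside, and rainbow crossing edges, so these are the crux.

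To finish I would colour the $W$–$(V\setminus W)$ edges not with fresh colours but so as to force, in every such split clique, a repeated colour among the crossing edges (or a collision of a crossing edge with a within-$W$ edge). Since every vertex of $V\setminus W$ is incident with fewer than $\binom{n-1}{2}$ edges, the crossing bipartite graph is sparse on that side, which should leave enough room for a random proper colouring of the crossing edges to spoil all mixed cliques at once by a union-bound or Lovász Local Lemma computation, possibly after first discarding from $W$ the few vertices whose crossing-degree is itself too large (this only loses $o(|W|)$ vertices and does not affect the order of the bound). Producing such a crossing colouring — simultaneously proper and spoiling every mixed $K_n$ — is the main obstacle; everything else is bookkeeping with the inequalities of Theorem~\ref{thm:inequalities} and the local bounds of~\cite{AJMP}.
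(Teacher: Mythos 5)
Your upper bound and your overall plan (split the vertices by a degree threshold, colour the low-degree side with Vizing using too few colours for a rainbow clique, colour the high-degree side using a rainbow-free colouring that exists because that side is small compared with the local anti-Ramsey number) are in the right spirit, but the proposal has a genuine gap exactly where you say it does: the mixed cliques. With your threshold you only kill copies of $K_n$ having at least $n-1$ vertices on one side, and the copies split $a$/$(n-a)$ with $2\le a\le n-2$ are left to a hoped-for random or Lov\'asz-Local-Lemma colouring of the crossing edges. Nothing in the proposal makes that step work: the crossing colouring must be proper, must be compatible with the already fixed colourings on both sides (a crossing edge may not repeat a colour present at either endpoint, which severely limits the randomness available), the number of mixed cliques can be enormous, and ``spoiling'' a given mixed $K_n$ requires a collision inside a prescribed set of at most $\binom{n}{2}$ edges whose colours are drawn from palettes of size up to order $n^2$ --- so even a heuristic union bound is far from obvious. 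Moreover the key claim itself ($|W|\ge AR_{loc}(K_{n-1})$ for a minimum graph $G$ with $G\rightarrow K_n$) is only as strong as this missing construction, so the lower bound is not established.

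The fix is to lower the clique threshold from $n-1$ to $n/2$, which removes the mixed case entirely by pigeonhole. Concretely, let $\ell=\binom{n/2}{2}$, let $V_1$ be the vertices of degree at most $\ell-2$ and $V_2$ the rest. Colour $G[V_1]$ properly with at most $\ell-1<\binom{n/2}{2}$ colours by Vizing, so $G[V_1]$ has no rainbow $K_{n/2}$. If $|E(G)|<cn^5/\log n$ then $|V_2|\le 2|E(G)|/(\ell-1)$ is smaller than the lower bound $c_1(n/2)^3/\log(n/2)$ in \eqref{eq:local-complete} for a suitable $c$, so $G[V_2]$ admits a proper colouring with no rainbow $K_{n/2}$. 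Now colour the crossing edges \emph{arbitrarily}, subject only to properness: any copy of $K_n$ has at least $n/2$ vertices in $V_1$ or in $V_2$, hence contains a $K_{n/2}$ lying entirely in one side, and that sub-clique is not rainbow; so the copy of $K_n$ is not rainbow, and $G\not\rightarrow K_n$. Since $\ell=\Theta(n^2)$ and $AR_{loc}(K_{n/2})=\Theta(n^3/\log n)$, this contrapositive argument gives the same $c_1n^5/\log n$ order you were aiming for, without any analysis of crossing colours; the price of replacing $K_{n-1}$ by $K_{n/2}$ is only in the constants.
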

\begin{proof}
 The upper bound follows from  Corollary~\ref{cor:from-loc}.  For the lower bound,  we  shall show that  there is a positive constant $c$ such that 
 any graph $G$ on less than $cn^5/\log n$ edges can be  properly  colored  avoiding   a rainbow copy of $K_n$. 
 Let $\ell = \binom{n/2}{2}$,  $V_1$ be the set of vertices of $G$ of degree at most $\ell-2$ and  $V_2=V(G) \setminus V_1$. 
 We shall properly color the edges induced by $V_1$, $V_2$ and the edges between $V_1$ and $V_2$ separately.
Color $G[V_1]$ properly  with at most $\ell-1$ colors using Vizing's theorem. Since $\ell -1 < \binom{n/2}{2}$, there is no rainbow $K_{n/2}$ in $G[V_1]$.
 Because $|E(G)| < cn^5/\log n$ and every vertex in $V_2$ has degree at least $\ell-1$ we have
 $|V_2| \leq \frac{2|E(G)|}{\ell-1} < c' \frac{n^3}{\log n} \leq  c''  \cdot \frac{(n/2)^3}{\log(n/2)}.$
  Hence the lower bound in~\eqref{eq:local-complete} guarantees that there is a proper coloring of $G[V_2]$ without rainbow copy of $K_{n/2}$ for an 
  appropriate constant.  Now, coloring the edges between $V_1$ and $V_2$ arbitrarily so that the resulting coloring is proper, we see that $G$ has no rainbow copy of $K_n$ under the described coloring because this copy would have at least $n/2$ vertices either in $V_1$ or $V_2$. However, neither $G[V_1]$ nor $G[V_2]$ contains a rainbow copy of $K_{n/2}$.
\end{proof}

To get general lower bounds on $AR_s(H)$, we can use lower bounds on $AR_{FF}(H)$, which we present in Section~\ref{sec:first-fit} below. For some sparse graphs $H$, such as matchings and paths, this   gives the correct order of magnitude. Note that $AR_{FF}(K_{1,k}) = AR_s(K_{1,k})=k$, so there is no meaningful lower bound just in terms of the number of edges of $H$. However, the maximum degree could help.

\begin{lemma}\label{lem:lowersize}
 Let $H$ be a graph on $k$ edges with maximum degree $\Delta$. Then
 $AR_s(H) \geq \frac{(k-1)^2}{2\Delta}.$
 Moreover, if $H$ has chromatic index $\chi'(H) = \Delta$, then
 $AR_s(H) \geq \frac{k(k+1)}{2\Delta}.$
\end{lemma}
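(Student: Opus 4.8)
The plan is to exhibit, for any graph $G$ with too few edges, a proper edge-coloring containing no rainbow copy of $H$. Let $G$ be a graph with $|E(G)| < \frac{(k-1)^2}{2\Delta}$; I want to show $G \not\rightarrow H$. The natural strategy is a \emph{greedy / counting} argument on the colors already present: I process the edges of $G$ in some order and color each edge with a feasible color (distinct from its at most $2(\Delta_G-1)$ neighbors) so that at every stage the number of distinct colors used so far stays below $k$. If we never use $k$ distinct colors, then trivially no rainbow subgraph with $k$ edges — in particular no rainbow $H$ — can appear. So the goal is to bound the number of \emph{new} colors we are ever forced to introduce.

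\textbf{Key steps.} First I would set up the right vertex partition of $G$: let $A$ be the set of vertices of degree at least $k-1$ (the ``high-degree'' vertices) and $B = V(G)\setminus A$. Since $|E(G)| < \frac{(k-1)^2}{2\Delta}$ and every vertex of $A$ has degree $\ge k-1$, we get $|A| \le \frac{2|E(G)|}{k-1} < \frac{k-1}{\Delta}$. The point of this bound is that $G[A]$ has at most $\binom{|A|}{2} < \binom{(k-1)/\Delta}{2}$ edges, which is comfortably smaller than $k-1$ (using $\Delta\ge 1$ and that $k$ is not too small; the degenerate small cases can be checked directly), so $G[A]$ can be properly colored with at most $k-1$ colors by Vizing — no rainbow $H$ inside $G[A]$ just because $H$ has $k$ edges but only $<k$ colors are available. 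Second, I would color the rest of the edges — those incident to $B$ — greedily, reusing the $<k$ colors from $G[A]$ whenever possible. Here the crucial observation is that a vertex $v\in B$ has degree $\le k-2$, so when we color an edge at $v$ we forbid at most $(k-2) + (\Delta_G - 1)$ colors from the other endpoint; but more importantly, we want to argue that the \emph{total} palette never needs to exceed $k-1$ colors. This is where the max-degree $\Delta$ of the \emph{target} $H$ enters: any copy of $H$ in $G$ uses vertices each of whose $H$-degree is at most $\Delta$, and the edges of $H$ at a vertex must all get distinct colors — so we only need to defeat rainbow-ness locally around each potential copy. I would make this precise by ensuring the coloring has the property that around every vertex, among the edges of $G$, at most $\Delta - 1$... — actually the cleanest route is: keep a global palette of size $< k$, which is automatically impossible to contain a rainbow $k$-edge graph; the only obstruction to maintaining a small global palette is a vertex forcing many new colors, and only the $<|A|$ high-degree vertices plus their incident edges are ``expensive,'' contributing at most $|A|\cdot\Delta_G$ genuinely-new colors — and one then argues $|A|\cdot(\text{something}) < k$.

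\textbf{The $\chi'(H)=\Delta$ refinement.} For the stronger bound $AR_s(H)\ge \frac{k(k+1)}{2\Delta}$ under the hypothesis $\chi'(H)=\Delta$, I expect to use that $H$ itself then decomposes into $\Delta$ matchings, hence has a vertex — or rather, the extremal relation $k = |E(H)| \le \Delta\cdot(\text{size of largest matching})$ combined with $\tau$-type considerations — tightening the count on $|A|$: from $|E(G)|<\frac{k(k+1)}{2\Delta}$ we get $|A|<\frac{k+1}{\Delta}\cdot\frac{k}{k-1}$, roughly $\frac{k+1}{\Delta}$, and then $\binom{|A|}{2}$ together with the $\chi'(H)=\Delta$ hypothesis lets the induced colored graph on $A$ avoid a rainbow $H$ with one fewer color to spare, since a rainbow $H$ would need $k$ colors while $G[A]$ can be $\chi'(G[A])$-edge-colored and $\chi'(G[A]) \le |A|-1 < k$ — the extra factor is squeezed out of the sharper inequality $\binom{m}{2} \le \frac{k(k+1)}{2\Delta}$ versus $\binom{m}{2}\le\frac{(k-1)^2}{2\Delta}$.

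\textbf{Main obstacle.} The delicate point is step two: guaranteeing that coloring the $B$-incident edges greedily never forces the global number of colors past $k-1$. Naively, each new edge could demand a fresh color. The resolution must exploit that low-degree vertices ($\deg \le k-2$) never individually force more than $k-2$ colors locally, and that the global palette grows only at high-degree vertices, of which there are $<(k-1)/\Delta$. Making the bookkeeping tight enough to land exactly on $\frac{(k-1)^2}{2\Delta}$ (and the $k(k+1)$ variant) — rather than losing a constant factor — is the real work; I would handle it by a careful amortized count of new colors introduced per vertex of $A$, rather than per edge.
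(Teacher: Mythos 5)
Your overall strategy --- take an arbitrary $G$ with few edges and exhibit a proper coloring with no rainbow $H$ by keeping the global palette below $k$ colors --- has a gap at exactly the point you flag as ``the real work,'' and the gap is not just a missing computation: the plan as stated cannot succeed. Any vertex of $G$ of degree at least $k$ forces at least $k$ distinct colors in \emph{every} proper coloring, and such graphs with fewer than $\frac{(k-1)^2}{2\Delta}$ edges certainly exist, so ``never use $k$ distinct colors'' is not an invariant you can maintain. Your proposed repair --- that only the $|A|<\frac{k-1}{\Delta}$ high-degree vertices are expensive, contributing at most $|A|\cdot\Delta_G$ new colors, with $|A|\cdot(\text{something})<k$ --- fails because $\Delta_G$ is not bounded in terms of $k$ and $\Delta(H)$; a single vertex of $A$ can force arbitrarily many colors. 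Once many colors are unavoidable, ``no rainbow $H$'' must come from a structural argument about where copies of $H$ can sit (as in the paper's proof of the $K_n$ lower bound, where every copy of $K_n$ puts $n/2$ vertices inside one part), and your sketch provides no such argument for copies of $H$ straddling $A$ and $B$. A symptom of the problem is that $\Delta=\Delta(H)$ never genuinely enters your counting: it appears only in the bound on $|A|$, which then only controls the negligible graph $G[A]$, so there is no mechanism that could produce the factor $\frac{1}{2\Delta}$ in the final bound. The refinement for $\chi'(H)=\Delta$ inherits the same difficulty.

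The paper argues in the opposite direction and avoids constructing a bad coloring of all of $G$. It takes any $G$ with $G\rightarrow H$, fixes a copy $H_0$ of $H$, properly colors $H_0$ with $\chi'(H)<k$ colors (so it is not rainbow), extends arbitrarily to a proper coloring of $G$, and uses $G\rightarrow H$ to find a rainbow copy $H_1$, which can share at most $\chi'(H_0)$ colors with $H_0$ and hence contributes at least $k-\chi'(H_0)$ new edges. Iterating, the union $\hat H_i$ of $i$ copies of $H$ has maximum degree at most $i\Delta$, so by Vizing $\chi'(\hat H_i)\leq i\Delta+1$ --- this is precisely where $\Delta(H)$ enters --- and each round adds at least $k-(i\Delta+1)$ edges. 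Summing over $\ell=\lceil\frac{k-1}{\Delta}\rceil$ rounds gives $\frac{(k-1)^2}{2\Delta}$, and when $\chi'(H)=\Delta$ the ``$+1$'' disappears, yielding $\frac{k(k+1)}{2\Delta}$. If you want to keep a constructive flavor, note that this argument is really a strategy for building a rainbow-$H$-free coloring as long as $G$ is small: it is an adversary argument against $G\rightarrow H$, not a coloring of a fixed small $G$.
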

\begin{proof}
 Let $G$ be a graph with $G\rightarrow H$. We shall find several copies of $H$ in $G$ one after another and argue that as long as we do not reach a certain number of copies we can color  their edges properly with less than $k$ colors, i.e., without a rainbow copy of $H$.
 Without loss of generality we assume that $\chi'(H) = \chi' < k$, which means that $H$ is neither a star nor a triangle. Note that for graphs with $\chi' = k$ we have $AR_s(H) = k$ and the claimed inequalities hold.
 
Let $H_0$ be any copy of $H$ in $G$. Edge-color $H_0$ properly with $\chi'$ colors. Because $\chi' < k$,  $H_0$ is not rainbow. 
Extending this coloring arbitrarily to a proper coloring of $G$ gives some rainbow copy $H_1$ of $H$ since $G\to H$. 
Since $H_1$ is rainbow it shares at most $\chi'$ colors with $H_0$. Thus $H_1\setminus H_0$ has at least $k-\chi'$ edges. 
Color $H_0 \cup H_1$ properly with $\chi'(H_0 \cup H_1)$ colors.  Generally, for $i>1$ after having colored $\hat{H_i} = \cup_{0 \leq j < i}H_j$ with $\chi'(\hat{H_i})$ colors
 and extending this to a proper coloring  of $G$, there is a rainbow copy $H_i$ of $H$ in $G$ sharing at most $\chi'(\hat{H_i})$ edges with $\hat{H_i}$ and thus contributing 
 at least $k-\chi'(\hat{H_i})$ new edges to $G$. Since for each $i\geq 1$ the graph $\hat{H_i}$ is the union of $i$ copies of $H$ we have 
 $\Delta(\hat{H_i}) \leq i\Delta$ and hence $\chi'(\hat{H_i}) \leq i\Delta + 1$.\\
Let $\ell$ be the  smallest integer  for which $\chi'(\hat{H_\ell}) \geq k$.  With $\chi'(\hat{H_\ell}) \leq \ell \Delta +1$ we obtain $\ell = \lceil\frac{k-1}{\Delta}\rceil$ and conclude that
 \begin{align*}
 AR_s(H) 
  &\geq |E(G)| 
   \geq |E(\hat{H_\ell})|
   \geq |E(H_0)| + \sum_{i=1}^{\ell-1}(k - \chi'(\hat{H}_i)) =  \ell k - \sum_{i=1}^{\ell-1}     \chi'(\hat{H}_i)\\
  &\geq \ell k - \sum_{i=1}^{\ell-1} (i \Delta +1)
   = \ell k - \Delta \binom{\ell}{2} - (\ell-1)
   > \ell(k - \frac{\Delta}{2}(\ell-1) - 1) 
   \geq \frac{(k-1)^2}{2\Delta}.
 \end{align*}
%
 This proves the first part of the statement. Now if $\chi'(H) = \Delta$, then $\chi'(\hat{H_i}) \leq i\chi'(H) = i\Delta$ and thus $\ell = \lceil \frac{k}{\Delta} \rceil$, which gives in a
 similar calculation that $AR_s(H)\geq {k(k+1)}/{2\Delta}.$
 \end{proof}

Next we derive several upper bounds on $AR_s(H)$ for some specific graphs $H$, namely paths, cycles and matchings. Recall from Corollary~\ref{cor:from-loc} that $AR_s(P_k) \leq \binom{4k/3}{2} + o(k^2) = \frac{8}{9}k^2 + o(k^2)$, which simply follows from the best known bound on $AR_{loc}(P_k)$. Apparently, this remains the best-known upper bound. But we can improve the bound $AR_s(C_k) \leq \binom{7k/4}{2} + o(k^2) = \frac{49}{32}k^2 + o(k^2)$ from Corollary~\ref{cor:from-loc}.

\begin{theorem}\label{thm:sizePathCycle}
 For every $k\geq 2$,    $\frac{k(k-1)}{4} \leq AR_s(P_{k-1}) \leq AR_s(C_k) \leq k^2$, and  $AR_s(M_k) =\binom{k+1}{2}$.
\end{theorem}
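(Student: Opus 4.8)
The plan is to prove the three separate assertions in Theorem~\ref{thm:sizePathCycle}: the lower bound $AR_s(P_{k-1}) \ge k(k-1)/4$, the trivial monotonicity $AR_s(P_{k-1}) \le AR_s(C_k)$, the upper bound $AR_s(C_k) \le k^2$, and the exact value $AR_s(M_k) = \binom{k+1}{2}$.

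\textbf{The path lower bound.} For $AR_s(P_{k-1}) \ge k(k-1)/4$ I would invoke Lemma~\ref{lem:lowersize}. A path on $k-1$ edges has maximum degree $\Delta = 2$ and, crucially, chromatic index $\chi'(P_{k-1}) = 2 = \Delta$ (for $k-1 \ge 2$). Hence the second, stronger bound in the lemma applies with $k$ replaced by $k-1$ edges and $\Delta = 2$, giving $AR_s(P_{k-1}) \ge (k-1)k/(2\cdot 2) = k(k-1)/4$. I should just check the small/degenerate cases $k = 2$ separately by hand.

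\textbf{The monotonicity.} The inequality $AR_s(P_{k-1}) \le AR_s(C_k)$ follows because any graph $G$ with $G \to C_k$ also satisfies $G \to P_{k-1}$: a rainbow $C_k$ contains a rainbow $P_{k-1}$ as a subgraph (delete one edge of the cycle), so the minimum number of edges forcing a rainbow $C_k$ is at least the minimum forcing a rainbow $P_{k-1}$.

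\textbf{The cycle upper bound.} For $AR_s(C_k) \le k^2$ I need to exhibit a graph $G$ with at most $k^2$ edges such that every proper edge-coloring of $G$ contains a rainbow $C_k$. A natural candidate is a ``blow-up'' of $C_k$: take vertex classes $V_1, \dots, V_k$ arranged cyclically and put a complete bipartite graph between consecutive classes $V_i$ and $V_{i+1}$ (indices mod $k$); choosing each $|V_i|$ of size about $k$ yields roughly $k \cdot k^2$ edges, which is too many, so I would instead take $|V_i|$ much smaller. The right sizes are probably $|V_i| = 2$ or a small constant, or perhaps one takes the classes of geometrically growing size and argues greedily: having chosen a rainbow path $v_1 \dots v_i$ with $v_j \in V_j$ using $i-1$ distinct colors, the vertex $v_i$ sends edges to all of $V_{i+1}$, and since these are properly colored and there are few forbidden colors (the $i-1$ used so far), if $|V_{i+1}|$ exceeds the number of forbidden colors we can extend. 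The wrap-around step closing the cycle from $v_k$ back to $v_1$ is the delicate one and will dictate the sizes. I expect the honest count, after optimizing, to come out to at most $k^2$ edges; the main obstacle here is getting the blow-up sizes and the closing argument to fit under the $k^2$ bound rather than something like $Ck^2$ with $C>1$, and I would look to the construction in~\cite{GM} or a direct recursive choice of class sizes to pin it down.

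\textbf{The matching value.} For $AR_s(M_k) = \binom{k+1}{2}$ I prove matching bounds. The lower bound $AR_s(M_k) \ge \binom{k+1}{2} = k(k+1)/2$ follows from Lemma~\ref{lem:lowersize}: $M_k$ has $k$ edges, $\Delta(M_k) = 1$, and $\chi'(M_k) = 1 = \Delta$, so the stronger bound gives $AR_s(M_k) \ge k(k+1)/(2\cdot 1) = \binom{k+1}{2}$. For the upper bound I must show some graph $G$ with $\binom{k+1}{2}$ edges forces a rainbow $M_k$ under every proper coloring; the natural choice is $G = K_{k+1}$, which has exactly $\binom{k+1}{2}$ edges. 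Here I would argue that in any proper edge-coloring of $K_{k+1}$ there is a rainbow perfect-or-near-perfect matching of size $k$: $K_{k+1}$ on an odd number of vertices has, under a proper coloring with at least $k$ colors (which any proper coloring of $K_{k+1}$ uses, since $\chi'(K_{k+1}) = k$), a rainbow matching saturating all but one vertex. This is essentially the statement $AR_{loc}(M_k) = 2k$ of Kostochka--Yancey applied carefully, or a short direct greedy/alternating-path argument: pick edges of distinct colors greedily; if stuck with a matching of size $< k$ covering $< 2k \le k+1$... wait, $2k$ versus $k+1$ — for $k \ge 2$ we have $k+1 < 2k$, so I cannot cite $AR_{loc}(M_k)=2k$ directly and instead need the sharper fact that $\binom{k+1}{2}$ \emph{edges} suffice. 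I would therefore give a direct argument on $K_{k+1}$: order the colors, and build a rainbow matching by a swapping/augmenting argument, using that each color class in a proper coloring of $K_{k+1}$ is a matching of size $\le k/2$... The cleanest route is probably to show directly that a properly colored $K_{k+1}$ always contains a rainbow matching of size $k$ by induction on $k$, removing a suitably chosen edge and its endpoints. The main obstacle in this part is verifying this last combinatorial fact about $K_{k+1}$ cleanly; everything else is bookkeeping.
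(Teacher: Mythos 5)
Your lower bounds (for $P_{k-1}$ and for $M_k$ via Lemma~\ref{lem:lowersize}) and the monotonicity $AR_s(P_{k-1})\le AR_s(C_k)$ coincide with the paper's arguments and are fine. Both upper bounds, however, contain genuine gaps.

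For the matching, your candidate $G=K_{k+1}$ cannot work: a matching with $k$ edges spans $2k$ vertices, and $2k>k+1$ for $k\ge 2$, so $K_{k+1}$ does not even contain $M_k$ as a subgraph, rainbow or otherwise. You noticed the tension ($2k$ versus $k+1$) but then reverted to claiming a rainbow matching of size $k$ in $K_{k+1}$, which is impossible --- a near-perfect matching of $K_{k+1}$ has only about $k/2$ edges. The construction that actually achieves $\binom{k+1}{2}$ is the vertex-disjoint union of stars with $1,2,\dots,k$ edges: in any proper coloring the star with $i$ edges carries $i$ distinct colors, of which at most $i-1$ have been used by the edges greedily chosen from the smaller stars, so a rainbow $M_k$ is always found.

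For the cycle, you never actually produce a graph with at most $k^2$ edges, and the balanced blow-up you sketch first does not get there: the greedy extension needs $|V_{i+1}|\gtrsim i$ to dodge the $i-1$ colors already used (properness forbids only one edge per color per vertex), so $\sum_i |V_i|\,|V_{i+1}|$ is of order $k^3$. The fix is to keep every other class a single vertex: take hubs $v_1,\dots,v_{k/2}$ and disjoint sets $W_i$ with $|W_i|=O(i)$, join $\{v_i,v_{i+1}\}$ completely to $W_i$ for $i<k/2$ and $\{v_1,v_{k/2}\}$ to $W_{k/2}$. At step $i$ one picks $w\in W_i$ so that the $2$-path $v_i,w,v_{i+1}$ avoids the $2(i-1)$ colors used so far; at most $4(i-1)$ vertices of $W_i$ are bad, and each bipartite piece contributes only $2|W_i|=O(i)$ edges, for a total of at most $k^2-2k+2$ when $k$ is even (odd $k$ is handled by contracting one edge of a $2$-path). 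Without this asymmetric choice of class sizes the count does not come in under $k^2$.
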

\begin{proof}
 The lower bound on $AR_s(P_{k-1})$ follows directly from Lemma~\ref{lem:lowersize}. And since $P_{k-1}$ is a subgraph of $C_k$ we have that $AR_s(P_{k-1}) \leq AR_s(C_k)$.

 For the upper bound $AR_s(C_k) \leq k^2$ we first consider the case that $k$ is even. 
 We shall construct $G$ such that $G\rightarrow C_k$.
 Let $V(G) = \{v_1, \ldots, v_{k/2}\} \cup W_1 \cup \cdots \cup W_{k/2}$, where 
 the union is disjoint and $|W_i|=4(i - 1)$,  for $i=2, \ldots, k/2$ and $|W_1|= 1$.
 Let the edge set of $G$ consist of the union of complete bipartite graphs  with parts $\{v_i, v_{i+1}\}, W_i$, for $i=1, \ldots, k/2-1$ and
 $\{v_1, v_{k/2}\},  W_{k/2}$. We refer to Figure~\ref{fig:path_size_UB} for an illustration of the graph $G$.
 
 \begin{figure}[htb]
  \centering
  \includegraphics{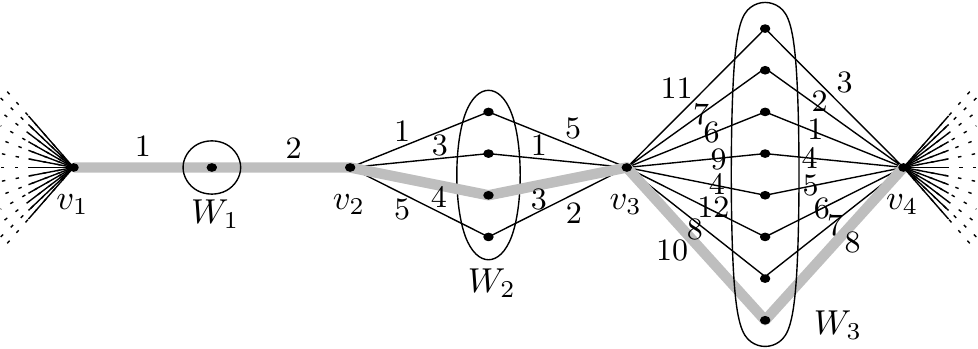}
  \caption{The portion of the graph $G$ with $G \to C_k$ consisting of the first three complete bipartite subgraphs and a proper coloring of it. The rainbow paths $R_1$, $R_2$ and $R_3$ are highlighted.}
  \label{fig:path_size_UB}
 \end{figure}

%
 %
 
 We shall identify a rainbow $C_k$ greedily by picking a rainbow path $R_1= v_1, w_1, v_2$, $w_1\in W_1$, 
 then a rainbow path $R_2 = v_2, w_2, v_3$,\;\;$w_2 \in W_2$, such that the colors of $R_2$ and $R_1$ are disjoint and so on. 
 Then $|E(G)| = 2+ \sum_{i=2}^{k/2} 8(i-1)   \leq k^2 - 2k +2$.

 This concludes the proof when $k$ is even. Contracting one of the two edges in the only $2$-path between $v_0$ and $v_2$, we obtain a graph $G'$ with $G' \to C_{k-1}$, 
 and with at most $k^2 - 2k +1  = ((k-1)+1)^2 - 2(k-1) -2 +1 = (k-1)^2 $ edges.
 This completes the proof for  odd cycles.
 
 To show the upper bound  for matchings, consider the graph $G_k$ that is the vertex  disjoint union of $k$ stars on $1, 2, \ldots, k$ edges respectively. 
 It is clear that  any proper coloring of $G_k$ contains a rainbow matching constructed by greedily picking an available edge from the stars starting 
 with the smallest star and ending with the star of size $k$.
 The lower bound follows from Lemma~\ref{lem:lowersize}.
\end{proof}

%

Next let $AR^*_s(H)$ be the smallest number of edges in a \emph{not necessarily simple} graph $G$ with $G \to H$, that is, every proper edge-coloring of $G$ contains a rainbow copy of $H$. Clearly we have $AR^*_s(H) \leq AR_s(H)$ for every graph $H$. Indeed, we can prove slightly better upper bounds when graphs with multiple edges are allowed.

\begin{theorem}\label{thm:multigraphs}
 For any graph $H$ with $k$ edges $AR^*_s(H) \leq \binom{k+1}{2}$,  ~$AR^*_s(P_k) \leq \binom{k}{2}+1$, ~  $AR^*_s(C_k) \leq \binom{k}{2}$,  and $AR^*_s(M_k) = \binom{k+1}{2}$.
\end{theorem}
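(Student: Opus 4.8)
The plan is to prove the four statements of Theorem~\ref{thm:multigraphs} by exhibiting explicit multigraphs, mostly by allowing the ``star'' gadgets used in the simple case to degenerate into bundles of parallel edges, which is cheaper.

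\textbf{General bound $AR^*_s(H)\le\binom{k+1}{2}$.} I would take a fixed edge-ordering $e_1,\dots,e_k$ of $E(H)$ and build $G$ as follows: $G$ contains one copy $H_0$ of $H$, and for each $i$ we attach, on the two endpoints of the $i$-th edge of $H_0$, a bundle of $i-1$ extra parallel edges (so the $i$-th edge of $H_0$ sits inside a bundle of multiplicity $i$ total). Total edge count is $\sum_{i=1}^k i = \binom{k+1}{2}$. Given any proper coloring, I process the edges of $H_0$ in the order $e_1,\dots,e_k$: edge $e_i$ together with its $i-1$ parallel partners form $i$ pairwise-adjacent edges receiving $i$ distinct colors, so among these $i$ edges at least one has a color not among the $i-1$ colors already chosen for $e_1,\dots,e_{i-1}$; pick that edge as the realization of $e_i$. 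This greedily yields a rainbow copy of $H$. (One must check the substituted edges still form a copy of $H$ — they do, since parallel edges have the same endpoints.)

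\textbf{Paths and cycles.} For $P_k$ the same construction with the natural edge-ordering along the path gives $G$ on $\binom{k+1}{2}$ edges; but since the first edge needs no parallel partner and the last bundle can be shared with the cyclic construction, I would instead build the cycle gadget first. For $C_k$: take vertices $u_1,\dots,u_k$ in a cycle, and on the $i$-th cyclic edge put a bundle of multiplicity $i$ for $i=1,\dots,k$? That overcounts; instead use multiplicities $1,2,\dots,k$ but observe for a cycle the first two edges are not adjacent, so we can start the greedy with a free choice on $e_1$ and only need multiplicity $i-1$ for $e_i$, or rather, arrange multiplicities $0,1,\dots,k-1$ cyclically — a bundle of multiplicity $0$ is impossible, so keep a single edge there: multiplicities $1,1,2,3,\dots,k-1$, summing to $1+\binom{k}{2}$? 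I need to be careful: since in $C_k$ edge $e_1$ and $e_2$ are adjacent, the clean statement is multiplicities $1,2,\dots,k-1,k$ minus one saved because after choosing $e_1,\dots,e_{k-1}$ rainbow, the last edge $e_k$ is forced to avoid $k-1$ colors but also is adjacent to only its two neighbors in the cycle, so multiplicity $k-1$? The correct bookkeeping giving exactly $\binom{k}{2}$ is: multiplicities $1,2,\dots,k-1$ on $k-1$ of the edges and a single edge on the last, yielding $\binom{k}{2}$ edges, with the greedy run so that the multiplicity-$i$ bundle realizes the $(i{+}1)$-st chosen path-edge; for $P_k$ one extra edge is needed to close, giving $\binom{k}{2}+1$. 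The main obstacle is getting these off-by-one counts exactly right and verifying the greedy order never gets stuck.

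\textbf{Matchings, exact value.} For $M_k$ the upper bound $AR^*_s(M_k)\le\binom{k+1}{2}$ follows from the general bound (or directly: $k$ vertex-disjoint bundles of multiplicities $1,2,\dots,k$, greedily picking an unused color from each). For the matching lower bound $AR^*_s(M_k)\ge\binom{k+1}{2}$, I would argue that any multigraph $G$ with $G\to M_k$ must have at least $\binom{k+1}{2}$ edges: suppose $|E(G)|<\binom{k+1}{2}$ and show a proper coloring with no rainbow $M_k$. Order the components of $G$; since $M_k$ has chromatic index $1$, Lemma~\ref{lem:lowersize} does not directly apply, so instead I would use the ``build rainbow matchings one at a time'' argument (as in the proof of Lemma~\ref{lem:lowersize} adapted to matchings): start with a copy $M^{(0)}$ of $M_k$, color it with one color (it is not rainbow as $k\ge2$), extend to a rainbow $M^{(1)}$ which must use $k-1$ edges outside the colored part, etc.; if $\hat M_i$ is the union of $i$ matchings then $\chi'(\hat M_i)=\Delta(\hat M_i)\le i$, so each new rainbow matching contributes at least $k-i$ genuinely new edges, and summing $k+(k-1)+\dots+1=\binom{k+1}{2}$ forces that many edges in $G$. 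This mirrors the $\chi'(H)=\Delta$ case of Lemma~\ref{lem:lowersize}; the only subtlety is that this bound argument works identically for multigraphs, since the inequality $\chi'(\hat M_i)\le i$ holds for multigraphs too (a union of $i$ matchings is $i$-edge-colorable). I expect this lower-bound verification for multigraphs — specifically checking the edge-coloring / extension argument survives multiple edges — to be the part requiring the most care.
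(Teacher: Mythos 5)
Your general construction (replace the $i$-th edge of $H$ by a bundle of $i$ parallel edges, then greedily pick from each bundle a color not yet used) and your treatment of $M_k$ --- upper bound from that construction, lower bound by noting that the extension argument of Lemma~\ref{lem:lowersize} with $\chi'=\Delta=1$ survives parallel edges because a union of $i$ matchings is still $i$-edge-colorable --- coincide with the paper's proof and are correct. The path bound is also fine once you settle on multiplicities $1,1,2,\ldots,k-1$ along the path (total $\binom{k}{2}+1$), which is exactly the paper's construction; the phrase ``one extra edge is needed to close'' is a red herring for a path, but the count is right.

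The cycle case, however, is not established. Your final ``correct bookkeeping'' --- multiplicities $1,2,\ldots,k-1$ on $k-1$ of the cycle edges plus a single edge on the last --- sums to $\binom{k}{2}+1$, not $\binom{k}{2}$, and every other variant you float ($1,1,2,\ldots,k-1$ cyclically, or a last bundle of multiplicity $k-1$) also exceeds the target by one. The saving you are groping for is that the closing bundle is adjacent to \emph{two} edges whose colors are already fixed (both of its cyclic neighbours), so properness eliminates two of the $k-1$ forbidden colors and multiplicity $k-2$ suffices there. The paper implements this by first building the $P_{k-1}$-forcing multigraph $G'$ on $\binom{k-1}{2}+1$ edges, observing that every rainbow $P_{k-1}$ it forces has the \emph{same} endpoints $u,v$ (a property you mention for your path gadget but never use), and then adding $k-2$ parallel $uv$-edges: each such edge automatically differs from the two end-edges of the path, so among its $k-2$ distinct colors at least one avoids the remaining $k-3$ path colors, and $\binom{k-1}{2}+1+(k-2)=\binom{k}{2}$. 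Without this observation your argument only yields $AR^*_s(C_k)\le\binom{k}{2}+1$.
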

\begin{proof}
 Let $H$ be any graph with $k$ edges. We label the edges of $H$ by $e_1, \ldots, e_k$ and define $G$ by replacing edge $e_i$ by $i$ parallel edges for all $1\leq i\leq k$. Clearly, $|E(G)| = \binom{k+1}{2}$ and  $G \to H$.

 For the path $P_k$ we take a $P_k=(e_1,\ldots,e_k)$ and define $G$ by replacing edge $e_i$ by $i-1$ parallel edges for $i\geq 2$. The edge $e_1$ remains as it is. Again, it is straight-forward to argue that every proper edge coloring of $G$ contains a rainbow $P_k$. Additionally, every copy of $P_k$ in $G$ has the same start and end vertices.

 For the cycle $C_k$ we first take the graph $G'$ on $\binom{k-1}{2}+1$ edges that forces a rainbow $P_{k-1}$ with end vertices $u$ and $v$. Now a graph $G$ is obtained from $G'$ by introducing $k-2$ parallel edges between $u$ and $v$. Then $|E(G)| = \binom{k-1}{2}+1 + k-2 = \binom{k}{2}$ and $G \to C_k$.

 Finally, observe that the lower bound on $AR_s(H)$ in Lemma~\ref{lem:lowersize} is also a valid lower bound on $AR^*_s(H)$. Together with the first part we conclude that $AR^*_s(M_k) = \binom{k+1}{2}$.
\end{proof}
 
Next, we shall give a general upper bound on $AR_s(H)$ in terms of the size $\tau(H)$ of a smallest vertex cover. We need the following special graph $K(s,t)$ that is the union of a complete bipartite graph with parts $S$ and $T$ of sizes $s$ and $t$, respectively, and a complete graph on vertex set $S$.

\begin{theorem}
 Let $H$ be an $n$-vertex graph with a vertex cover of size $\tau= \tau(H)$. Then there exists an absolute constant $c >0$ such that
 $$AR_s(H) \leq AR_s(K(\tau,n-\tau)) \leq cn\tau^5/ \log\tau + c \tau^6/ \log \tau.$$
\end{theorem}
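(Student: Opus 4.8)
The plan is to prove the two inequalities separately; the first is a one-line subgraph argument, the second a construction of a sparse host.

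For $AR_s(H)\le AR_s(K(\tau,n-\tau))$ I would embed $H$ into $K(\tau,n-\tau)$. Fixing a vertex cover $C$ of $H$ with $|C|=\tau$, every edge of $H$ has an endpoint in $C$, so $V(H)\setminus C$ is an independent set of size $n-\tau$; identifying $C$ with the clique side and $V(H)\setminus C$ with the independent side of $K(\tau,n-\tau)$ realises $H$ as a subgraph. Since every subgraph of a rainbow graph is rainbow, any $G$ with $G\to K(\tau,n-\tau)$ also satisfies $G\to H$, giving the inequality. It remains to construct a small graph forcing a rainbow $K(\tau,n-\tau)$.

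Write $s=\tau$, $t=n-\tau$, and set $N:=AR_{loc}(K_s)$, so that every proper edge-coloring of $K_N$ contains a rainbow $K_s$; by~\eqref{eq:local-complete} we have $N\le c_2 s^3/\log s$. Let $G$ consist of a core $A$ with $G[A]=K_N$ together with an independent set $B$ of $t':=ns^2+s^3$ further vertices, each joined to every vertex of $A$ and to nothing else, so that $|E(G)|=\binom N2+t'N$. Fix an arbitrary proper coloring of $G$. On $G[A]=K_N$ it yields a rainbow $K_s$ on some $S\subseteq A$ with colour set $C_0$, $|C_0|=\binom s2$. Call $v\in B$ \emph{good} if none of the $s$ edges from $v$ to $S$ has a colour in $C_0$; since each colour class is a matching, a fixed colour of $C_0$ lies on at most $|S|=s$ edges meeting $S$, so fewer than $s\binom s2<s^3/2$ vertices of $B$ are not good and at least $ns^2$ are good. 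For a good $v$, let $\chi(v)$ be the set of the $s$ colours on its edges to $S$; then $\chi(v)$ is disjoint from $C_0$. Now greedily take a maximal collection $v_1,v_2,\dots$ of good vertices with pairwise disjoint sets $\chi(v_i)$. If it has $t$ or more members, then $S$ together with $v_1,\dots,v_t$ spans a rainbow $K(s,t)$: the $\binom s2$ clique edges carry $C_0$, and at each $v_i$ the $s$ spoke edges are internally distinct by properness, disjoint across distinct $i$ by the choice of the $\chi(v_i)$, and disjoint from $C_0$ by goodness. If the collection has only $m<t$ members, then $X:=\chi(v_1)\cup\dots\cup\chi(v_m)$ has at most $ms\le (t-1)s<ns$ colours and, by maximality, meets $\chi(v)$ for every good $v$; as each colour of $X$ lies on at most $s$ edges meeting $S$, there would be fewer than $ns^2$ good vertices, a contradiction. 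Hence the first case occurs and $G\to K(s,t)$.

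It remains to bound $|E(G)|=\binom N2+t'N\le\tfrac12 N^2+(ns^2+s^3)N$; substituting $N\le c_2 s^3/\log s$ and $s=\tau$, $t=n-\tau$ gives $O(\tau^6/\log^2\tau)+O(n\tau^5/\log\tau)+O(\tau^6/\log\tau)$, which is the claimed bound for a suitable absolute constant $c$ (the finitely many small values of $\tau$, and the degenerate case $\tau=n$ where $K(s,0)=K_s$ and one uses $AR_s(K_s)\le\binom{AR_{loc}(K_s)}2$ directly, are absorbed into $c$). I expect the main obstacle to be exactly the ``good and disjoint'' extraction on $B$: one must force not a single rainbow spoke but $t=n-\tau$ of them simultaneously, with colour sets that are mutually disjoint and avoid the clique colours, all while keeping $|B|=\Theta(n\tau^2)$ small enough that $t'N$ stays $O(n\tau^5/\log\tau)$. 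Getting the two counting estimates there right — both driven by the fact that colour classes are matchings touching $S$ in at most $\tau$ edges — is the real content; the embedding and the arithmetic are routine.
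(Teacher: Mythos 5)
Your proposal is correct and follows essentially the same route as the paper: the host graph is the same $K(N,t')$ with $N=AR_{loc}(K_\tau)=O(\tau^3/\log\tau)$ and an independent side of size $\Theta(n\tau^2+\tau^3)$, you locate a rainbow $K_\tau$ in the clique part, discard the $O(\tau^3)$ outside vertices seeing a clique colour, and then extract $n-\tau$ vertices with pairwise disjoint spoke colour sets via the same matching-based count (your ``maximal family plus counting contradiction'' is just a rephrasing of the paper's greedy deletion of at most $\tau^2$ vertices per step). No gaps; the explicit reduction $AR_s(H)\le AR_s(K(\tau,n-\tau))$ via a vertex cover is also the intended one.
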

\begin{proof}
 Let $G=K(c_2 \tau^3/\log \tau,~    n\tau^2+2 \tau^3 )$  be properly colored, where $c_2$ is from~\eqref{eq:local-complete}. 
We shall show that there is a rainbow copy of $K(\tau,n-\tau)$ in $G$.

 First, by~\eqref{eq:local-complete}, there is subset $S'$ of $S$ with $\tau$ vertices inducing a rainbow clique. Let $C$ be the set of colors used on $G[S']$. Consider the set $E'$ of  edges between $S'$ and $T$ having a color from $C$. Then $|E'|\leq  |C|\cdot \tau = \binom{\tau}{2} \cdot \tau$. Let $T'\subseteq T$ be the set of vertices not incident to the edges in $E'$. So, we have that $|T'| \geq |T|- \binom{\tau}{2} \tau \geq n\tau^2 + \tau^3$. Now, the set of colors induced by $S'$ and the set of colors on the edges between $S'$ and $T'$ are disjoint. We shall greedily choose $n-\tau$ vertices from $T'$ so that they send stars of disjoint color sets to $S'$. Pick the first vertex $v_1$ from $T'$ arbitrarily. There are $\tau$ colors on the edges between $v_1$ and $S'$. There are at most $\tau^2$ edges of such colors between $S'$ and $T'$. Delete the endpoints of these edges from $T'$ and choose the next vertex $v_2$ from the remaining vertices in $T'$. The colors of the edges from $v_2$ to $S'$ are 
different from the colors on the edges from $v_1$ to $S'$. Again, delete at most $\tau^2$ vertices from $T'$, if these vertices are the endpoints of the edges having colors appearing on the $v_2-S'$ star. Proceeding in this manner, we can choose a set $T''$ of $n-\tau$ vertices from $T'$ such that $T''\cup S'$ induces a rainbow copy of $K(\tau, n-\tau)$.
\end{proof}


\section{First-Fit Anti-Ramsey numbers}
\label{sec:first-fit}

\begin{definition}\label{def:H-good}
 Let $G$ and $H$ be two graphs. A proper edge-coloring $c:E(G) \to \mathbb{N}$ of $G$ is called \emph{$H$-good} if
 \begin{enumerate}[label = (\arabic*), ref = (\arabic*)]
 \item $G$ contains a rainbow copy of $H$ and\label{item:rainbow}
 \item for every edge $e$ of color $i$ there are edges $e_1,\ldots,e_{i-1}$ incident to $e$ such that $c(e_j) = j$ for $j = 1,\ldots,i-1$.\label{item:support}
 \end{enumerate}
 A graph $G$ is called \emph{$H$-good} if it admits an $H$-good coloring.
\end{definition}

\begin{lemma}[First-Fit Lemma]\label{lem:first-fit-lemma}
 Let $H$ be a graph. Then $AR_{FF}(H)$ equals the minimum number of edges in an $H$-good graph.
%
\end{lemma}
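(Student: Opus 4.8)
The plan is to prove the two inequalities $AR_{FF}(H) \ge m^*$ and $AR_{FF}(H) \le m^*$ separately, where $m^*$ denotes the minimum number of edges in an $H$-good graph. The guiding observation is that an $H$-good coloring is nothing but a transcript of a finished First-Fit game that Builder has won: condition~\ref{item:support} says exactly that every edge received the least color not yet forbidden on it at the moment it appeared (which is the definition of First-Fit), and condition~\ref{item:rainbow} says exactly that Builder has achieved her goal.

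For $AR_{FF}(H) \ge m^*$, I would fix a Builder strategy that forces a rainbow $H$ against First-Fit Painter in $t := AR_{FF}(H)$ moves, and simply play the game out; since Painter's replies are forced, this yields a specific graph $G$ with $t$ edges carrying the coloring $c$ assigned by First-Fit. It then remains to check that $c$ is $H$-good. Properness is immediate from the definition of First-Fit, and $G$ contains a rainbow $H$ by the winning condition, giving~\ref{item:rainbow}. For~\ref{item:support}: if $c(e)=i$, then the reason First-Fit did not use a smaller color on $e$ is that, at the moment $e$ was built, each of the colors $1,\dots,i-1$ already occurred on some edge incident to $e$; these edges survive to the end of the game, so~\ref{item:support} holds in $G$. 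Thus $G$ is an $H$-good graph on $t$ edges, whence $m^* \le t = AR_{FF}(H)$.

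For the reverse inequality I would start from an arbitrary $H$-good graph $G$ with $H$-good coloring $c$ and $m$ edges, and describe a Builder strategy winning in $m$ moves against First-Fit. Builder lists $E(G)$ as $e_1,\dots,e_m$ with $c(e_1)\le\cdots\le c(e_m)$ and builds the edges in this order, creating the vertices of $G$ as needed. The crux — the step I expect to require the most care — is an induction on $j$ establishing that First-Fit colors $e_j$ with precisely $c(e_j)$. Granting this for all indices below $j$ and writing $i=c(e_j)$, condition~\ref{item:support} supplies edges $f_1,\dots,f_{i-1}$ of $G$ incident to $e_j$ with $c(f_\ell)=\ell$; each $f_\ell$ has color $\ell<i$, hence precedes $e_j$ in Builder's order, hence by the induction hypothesis is already present and carries First-Fit color $\ell$, so the colors $1,\dots,i-1$ are all blocked at $e_j$. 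Conversely, no edge of $G$ incident to $e_j$ has color $i$ (as $c$ is proper), so nothing built so far blocks color $i$, and First-Fit therefore assigns $i$. After step $m$ the built graph is exactly $G$ with coloring $c$, which contains a rainbow $H$, so $AR_{FF}(H)\le m$; minimizing over all $H$-good graphs $G$ gives $AR_{FF}(H)\le m^*$, and together with the first part $AR_{FF}(H)=m^*$.
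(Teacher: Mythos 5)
Your proposal is correct and follows essentially the same route as the paper: one direction reads off an $H$-good graph from the transcript of a won First-Fit game, and the other has Builder present an $H$-good graph in non-decreasing color order and shows by induction that First-Fit reproduces the coloring $c$. Your added remark that properness of $c$ guarantees color $i$ is actually available at $e_j$ is a small point the paper leaves implicit, but otherwise the arguments coincide.
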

\begin{proof}
%

 Clearly, if Painter uses First-Fit then at each state of the game the coloring of the current graph is properly colored  and satisfies~\ref{item:support}. 
 Hence if Painter uses First-Fit and the game ends then Builder has presented an $H$-good graph.
 
 \smallskip
 
 On the other hand, let $G$ be  an $H$-good graph and $c$ is an $H$-good coloring of $G$. Let  Builder present the edges in the order of  non-decreasing colors in $c$. 
 We claim that if Painter uses First-Fit she produces the coloring $c$. This  is true for the first edge in the ordering.  Now when an edge $e$ is presented  with $c(e)=i$,   
 we know  that the colors of all previously presented edges coincide with their colors in $c$. 
 By~\ref{item:support}, there are edges  $e_1,\ldots,e_{i-1}$ incident to $e$  that were presented before $e$ and thus  got their corresponding colors $1, \ldots, i-1$.
Thus $e$ must get color $i$ in the First-Fit setting. 
\end{proof}

\subsection{First Fit and weighted matchings}

The First Fit anti-Ramsey number of any graph $H$ is very closely related to the maximum weight of a matching in rainbow colorings of $H$. More precisely,
 we define the {\bf weight of a matching} $M$ with respect to an edge-coloring $c$ as the sum of colors of edges in $M$, that is, as $\sum_{e \in M} c(e)$. 
 A {\bf maximum matching} for $c$ is a matching $M$ with largest weight with respect to $c$, and a {\bf greedy matching} for $c$
 is a matching $M$ that is constructed by greedily taking an edge with the highest color that is not adjacent to any edge already picked. It is well-known that for any $c$ any greedy matching for $c$ has weight at least half the weight of a maximum matching for $c$~\cite{A}. However, we will not use this fact.

Note that if $c$ is a rainbow coloring of $H$, then the greedy matching for $c$ is unique. For a given graph $H$ we define
$$w_\text{max}(H) = \min_{\text{rainbow colorings $c$}} \; \sum_{e \in M} c(e), \quad \text{$M$ maximum matching for $c$}$$
and
$$w_\text{greedy}(H) = \min_{\text{rainbow colorings $c$}} \; \sum_{e \in M} c(e), \quad \text{$M$ greedy matching for $c$}.$$

\begin{lemma}\label{lem:matching_weigth}
 For any graph $H$ on $k$ edges, chromatic index $\chi'$ and vertex cover number $\tau$,  
 $$\frac{k^2}{4\chi'} \leq \frac{w_\text{max}(H)}{2} \leq AR_{FF}(H) \leq 2 w_\text{greedy}(H) \leq (\tau+1) k.$$

\end{lemma}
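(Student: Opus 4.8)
The plan is to establish the four inequalities separately, working outward from the middle.

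For the central inequality $w_\text{max}(H)/2 \leq AR_{FF}(H)$, I would use the First-Fit Lemma (Lemma~\ref{lem:first-fit-lemma}): let $G$ be an $H$-good graph with $c$ an $H$-good coloring, so $G$ contains a rainbow copy $H_0$ of $H$ and property~\ref{item:support} holds. Restrict $c$ to $H_0$ to get a rainbow coloring $c_0$ of $H$, and let $M$ be a maximum matching for $c_0$. The key observation is that for each edge $e \in M$ of color $i = c(e)$, property~\ref{item:support} gives $i-1$ edges of $G$ of colors $1,\ldots,i-1$ incident to $e$; since these ``support fans'' hang off the distinct, pairwise non-adjacent edges of $M$, the fans at different edges of $M$ are edge-disjoint in $G$. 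Hence $|E(G)| \geq \sum_{e \in M}\big((c(e)-1) + 1\big) = \sum_{e \in M} c(e)$ — wait, I should be slightly careful: an edge of color $1$ contributes no support edges but is itself counted, and an edge of color $i \geq 2$ contributes $i-1$ support edges plus itself, so each $e\in M$ accounts for at least $c(e)$ edges of $G$, giving $|E(G)| \geq \sum_{e\in M}c(e) = w_\text{max}(H)$ for \emph{this particular} rainbow coloring, hence $\geq w_\text{max}(H)$ after minimizing; this is even a factor-$2$ better than claimed, so a cruder disjointness argument (say, only the support edges are disjoint and I double count) comfortably yields $\geq w_\text{max}(H)/2$. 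I expect the precise bookkeeping of which edges are guaranteed disjoint to be the one place requiring care.

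For $AR_{FF}(H) \leq 2 w_\text{greedy}(H)$, I would again use Lemma~\ref{lem:first-fit-lemma} and exhibit a small $H$-good graph. Pick a rainbow coloring $c$ of $H$ achieving $w_\text{greedy}(H)$, with greedy matching $M = \{f_1,\ldots,f_m\}$ where $c(f_1) > c(f_2) > \cdots$. Starting from $H$ with coloring $c$, attach to each edge $e$ a ``support fan'' of new edges realizing colors $1,\ldots,c(e)-1$ so that property~\ref{item:support} is satisfied; doing this greedily edge by edge, an edge of color $i$ needs at most $i-1$ new pendant edges, but one can be economical by reusing: actually the cleanest route is to build the $H$-good graph directly as a disjoint-ish union of stars realizing the fans and check the edge count is at most $2w_\text{greedy}(H)$. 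The content here is that the greedy matching controls the total number of support edges needed, because the non-matching edges of $H$ can be absorbed cheaply (an edge not in $M$ is adjacent to a matching edge of higher color). I anticipate this is the most delicate of the four bounds.

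The two outer inequalities are routine. For $k^2/(4\chi') \leq w_\text{max}(H)/2$: in any rainbow coloring $c$ of $H$ the $k$ colors are $1,\ldots,k$ and the color classes partition $E(H)$; a maximum matching can greedily pick one edge from each of the top color classes, and since each proper-coloring color class... more simply, among colors $\lceil k/2\rceil+1,\ldots,k$ there are $k/2$ edges of total color $> k^2/4$, and by König/Vizing-type reasoning one finds a matching hitting at least a $1/\chi'$ fraction of them, so $w_\text{max}(H) \geq \frac{1}{\chi'}\cdot\frac{k^2}{4}$, i.e. $w_\text{max}(H)/2 \geq k^2/(8\chi')$ — here I would instead argue that a maximum-weight matching has weight at least $\frac{1}{\chi'}\sum_{e}c(e) = \frac{1}{\chi'}\binom{k+1}{2} \geq \frac{k^2}{2\chi'}$ since the edge set decomposes into $\chi'$ matchings and the heaviest has at least the average weight; this gives $w_\text{max}(H)/2 \geq k^2/(4\chi')$ directly. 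For $2w_\text{greedy}(H) \leq (\tau+1)k$: fix any vertex cover $X$ of size $\tau$; for any rainbow coloring, greedily matching edges in decreasing color order, each chosen edge $f$ uses up at least one vertex of $X$, and all edges of color higher than $c(f)$ that were skipped were blocked through a vertex of $X$ already used — so the $m \leq \tau$ greedy edges carry colors that, compared against all $k$ colors $1,\ldots,k$, satisfy $\sum c(f_i) \leq$ (crude bound) and one checks $w_\text{greedy}(H) \leq \frac{(\tau+1)k}{2}$ by a counting argument on how colors distribute among the cover vertices.
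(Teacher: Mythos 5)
Your plan for the two inner inequalities and the first one matches the paper in spirit, but two of the four steps have problems, one of them serious.

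\textbf{The central lower bound.} Your primary argument claims the support fans hanging off the edges of the maximum matching $M$ are edge-disjoint, which would give $|E(G)|\ge w_\text{max}(H)$. This is false: a single edge $g=uv$ of $G$ with $u$ an endpoint of $e\in M$ and $v$ an endpoint of $f\in M$ can serve as the color-$j$ support edge for both $e$ and $f$ simultaneously, so the fans can overlap. The correct accounting, which is what the paper does and what your fallback gestures at, is that every edge of $E(G)\setminus M$ is adjacent to at most two edges of $M$, so $\sum_{e\in M}(c(e)-1)$ counts each such edge at most twice, yielding $|E(G)|\ge |M|+\tfrac12\sum_{e\in M}(c(e)-1)\ge w_\text{max}(H)/2$. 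So the claimed bound survives, but only via the factor-$2$ correction; the ``factor-2 better than claimed'' assertion should be deleted.

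\textbf{The inequality $2w_\text{greedy}(H)\le(\tau+1)k$.} Here the gap is real. Since $w_\text{greedy}$ is a \emph{minimum} over rainbow colorings, you must \emph{exhibit} one coloring whose greedy matching is light; your plan instead argues ``for any rainbow coloring,'' and that universally quantified statement is false. For $H=P_3$ with colors $3,1,2$ along the path, the greedy matching consists of the two end edges and has weight $5>\tfrac{(\tau+1)k}{2}=4.5$. The paper's proof constructs the coloring explicitly: repeatedly take an edge of maximum degree in the remaining graph, list it and then all edges adjacent to it, delete its endpoints, and recurse; assign colors $k,k-1,\dots,1$ in this order. The greedy matching is then exactly the sequence of chosen max-degree edges $e_{i_1},\dots,e_{i_m}$, and writing $d_j$ for the degree of $e_{i_j}$ in the residual graph (so $d_1\ge\cdots\ge d_m$ and $\sum d_j=k$), its weight is $-k+\sum_j jd_j$, which a rearrangement estimate bounds by $(m-1)k/2\le(\tau-1)k/2$. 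This construction and computation are the entire content of the fourth inequality and are absent from your proposal; the ``counting argument on how colors distribute among the cover vertices'' does not exist in the form you describe. The remaining parts (the averaging over the $\chi'$ matchings for the first inequality, and the pendant-fan construction at the endpoints of the greedy matching for the third) coincide with the paper's arguments.
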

\begin{proof}
To prove the lower bound on  $AR_{FF}(H)$,  it suffices by  the First-Fit-Lemma to find a lower bound on  $|E(G)|$ for an $H$-good graph $G$.
Fix any $H$-good coloring $c$ of $G$.  There is a  rainbow coloring of $H$ and thus a  matching $M$ in $H$ with  $\sum_{e \in M} c(e) \geq w_\text{max}(H).$  
By~\ref{item:support} in Definition~\ref{def:H-good} each $e \in E(H)$ is incident to at least $c(e)-1$ other edges of $G$. Since every edge in $E(G) \setminus M$ is adjacent  to at most two edges in $M$ we get
 $$AR_{FF}(H) \geq |E(G)| \geq |M| + \frac{1}{2}\sum_{e \in M} (c(e)-1) = \frac{|M|}{2} + \frac{1}{2}\sum_{e \in M} c(e) \geq \frac{w_\text{max}(H)}{2}.$$
 
To prove that $w_\text{max}(H) \geq k^2/(2\chi')$ consider any rainbow coloring $c $ of $H$. The total sum of colors in $H$ is at least $1+ \cdots + k = \binom{k+1}{2}$.
Since  $H$ splits into $\chi'$ matchings $M_1,\ldots,M_{\chi'}$,  at least one of the matchings has weight at least  ${\binom{k+1}{2}}/{\chi'}$.  
Thus,   $w_\text{max}(H)  \geq {k^2}/{2\chi'}$, proving the first inequality of the lemma.

 \smallskip
 
To find  the upper bound $AR_{FF}(H)$, it is sufficient  by the First-Fit-Lemma  to construct an $H$-good graph $G$ with  
small number of edges.  Start with a rainbow coloring $c$ of $H$ and the greedy matching $M$ for $c$.
 Every edge $e$ in $H$ is either in $M$ or incident to an edge $e' \in M$ with $c(e') > c(e)$.
 We construct a graph $G$ with an $H$-good coloring by taking $H$ and $c$ and adding pendant edges to endpoints of edges in $M$ as follows. For every $e \in M$, every endpoint $v$ of $e$ and every color $c < c(e)$ that is not present at $v$ we add a new pendant edge $e_{c,v}$ of color $c$. Since for every $e \in M$ we added at most $2(c(e) - 1)$ new pendant edges we get
 $$AR_{FF}(H) \leq |E(G)| \leq |M| + \sum_{e \in M} 2(c(e)-1) \leq 2 \sum_{e \in M} c(e) - |M| \leq 2w_\text{greedy}(H).$$
 

 Finally, we find an upper bound on  $w_\text{greedy}(H)$.  Let the degree of an edge  in a graph be the number of edges adjacent to $e$, including $e$, let $V(e)$ be the set of endpoints of $e$.
 Order the edges $e_1, \ldots, e_k$  of $H$ as follows: $e_1=e_{i_1}$ is an edge having highest degree in $H$, 
 $e_{i_1+1}, \ldots, e_{i_2-1}$ are the edges adjacent to $e_{i_1}$,  $e_{i_2}$ is an edge of highest degree in $H- V(e_{i_1})$,  
 $e_{i_2+1}, \ldots, e_{i_3-1}$ are the edges adjacent to $e_2$ in $H-V(e_{i_2})$, $e_{i_3}$ is an edge or the highest degree in $H - (V(e_{i_1}) \cup V(e_{i_2}))$, and so  on.
 Assign the colors  $k, k-1, \ldots, 1 $ to the edges $e_1, e_2, \ldots, e_k$ respectively.
 Then $e_{i_1}, e_{i_2}, \ldots, e_{i_m}$ is a greedy matching, with the weight $w$. Denoting by $d_j$ the degree of $e_{i_j}$ in $H -(V(e_{i_1}) \cup \cdots \cup V(e_{i_j-1}))$,  $j=2, \ldots, m$, 
 and $d_1$ the degree of $e_{i_1}$ in $H$, we have 
  $w = k + (k- d_1) + (k-d_1-d_2) + \cdots + (k -  d_1-d_2 - \ldots - d_{m-1})$.
  Note also that  $d_1\geq d_2\geq \cdots \geq d_m$ and $d_1+ \cdots + d_{m} = k$.
  Thus 
  \begin{align*}
  w &= k(m-1) - \sum_{i=1}^{m-1} (m-i) d_i = mk - k - m\sum_{i=1}^{m-1} d_i  + \sum_{i=1}^{m-1} id_i \\
  &= mk -k - mk + md_m + \sum_{i=1}^{m-1} id_i  =  
  - k + \sum_{i=1}^{m} id_i  \leq  -k + \left(\sum_{i=1}^{m}i\right) \frac{k}{m} \\
  &= -k + (m+1) k/2 = mk / 2 - k/2= (m-1)k/2.
  \end{align*}
  Since $m\leq \tau$ we have that the $w_\text{greedy}(H)\leq w  \leq  (\tau-1)k/2$.
  \end{proof}

For each of the four inequalities in Lemma~\ref{lem:matching_weigth} it is easy to construct a graph for which this inequality is tight  up to a small additive constant. Moreover, as we shall see later the inequality $k^2/(4\chi') \leq AR_{FF}(H)$ is also asymptotically tight for some graphs like paths or matchings. However, we suspect that the upper bound $AR_{FF}(H) \leq (\tau+1)k$ is not tight.
If $\chi'(H) = \Delta(H)$, for example when $H$ is a forest, then by Lemma~\ref{lem:matching_weigth} we have $k^2/(4\Delta) \leq AR_{FF}(H) \leq (\tau+1)k$. Moreover, for every graph $H$ we have $k/\Delta \leq \tau$. Hence, one might think that $AR_{FF}(H)$ can be sandwiched between $k^2/(4\Delta)$ and $ck^2/\Delta$ or between $c(\tau+1)k$ and $(\tau+1)k$ for some absolute constant $c > 0$. However, this is not the case.

\begin{observation}
 For any constant $c > 0$ there exists forests $H_1, H_2$ each  with $k$ edges, maximum degree $\Delta$, and vertex cover number $\tau$, such that
 $$AR_{FF}(H_1) \geq c\frac{k^2}{\Delta} \qquad \text{and} \qquad AR_{FF}(H_2) \leq c(\tau+1)k.$$
\end{observation}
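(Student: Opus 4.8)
The observation has two independent parts; I would construct each forest separately by exploiting the large gap between $w_\text{greedy}(H)$ (which controls $AR_{FF}$ up to a factor $2$ by Lemma~\ref{lem:matching_weigth}) and the crude bounds $k^2/\Delta$ and $(\tau+1)k$ that one might naively expect to be tight.

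\textbf{The graph $H_1$ with $AR_{FF}(H_1) \geq c\,k^2/\Delta$.} Here I want $\Delta$ to be small (a constant) while forcing $w_\text{max}(H_1)$ — and hence $AR_{FF}(H_1)$ via the inequality $w_\text{max}(H)/2 \leq AR_{FF}(H)$ — to be of order $k^2$. The natural candidate is a long path $P_k$ (so $\Delta = 2$): every rainbow coloring of $P_k$ contains a matching of weight $\Omega(k^2)$, since the $k$ colors $1,\dots,k$ split into just two matchings and the heavier one has weight at least $\binom{k+1}{2}/2$. Then $AR_{FF}(P_k) \geq w_\text{max}(P_k)/2 = \Omega(k^2) = \Omega(k^2/\Delta)$ because $\Delta = 2$. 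To beat an arbitrary constant $c$, note that with $\Delta$ bounded this already works once $k$ is large; more robustly, take $H_1$ to be a disjoint union of paths or a spider with long legs so that $\Delta$ stays constant and the lower bound $k^2/(4\chi') = k^2/8$ from Lemma~\ref{lem:matching_weigth} exceeds $c\,k^2/\Delta = c\,k^2/2$ for all $c < 1/4$; for larger $c$ one lets $k \to \infty$ while keeping $\Delta$ fixed at $2$, so that the genuine value $\Theta(k^2)$ dominates $c k^2/2$. I expect this direction to be routine.

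\textbf{The graph $H_2$ with $AR_{FF}(H_2) \leq c(\tau+1)k$.} Now I want the opposite: $\tau$ should be large (close to $k$) while $AR_{FF}(H_2)$ stays near $k^2/\Delta$, which is much smaller than $(\tau+1)k \approx k^2$ when $\Delta$ is large. The right example is a star forest that is very lopsided — e.g. $H_2$ a disjoint union of $t$ stars each with $k/t$ edges, or better, take $H_2 = K_{1,k}$-like pieces arranged so that $\tau = t$ is the number of components (each star contributes $1$ to the vertex cover) but $\Delta = k/t$ is large. By Lemma~\ref{lem:matching_weigth}, $AR_{FF}(H_2) \leq 2 w_\text{greedy}(H_2)$, and for a star forest the greedy matching picks one edge per component; choosing the rainbow coloring that assigns the smallest colors to the largest star and so on, one computes $w_\text{greedy}(H_2) = O(kt)$, hence $AR_{FF}(H_2) = O(kt)$, while $(\tau+1)k = (t+1)k$. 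So $AR_{FF}(H_2)/((\tau+1)k) = O(1)$ uniformly — but that only gives a fixed constant, not an arbitrarily small one. To push it below any $c$, I instead make $\tau$ much larger than $m$, the size of the greedy matching: take $H_2$ a disjoint union of $s$ stars each of size $k/s$, so $\tau = s$ but the greedy matching has size $m = s$ too; this doesn't separate them. The correct move is a single "broom" or a star with subdivided edges: let $H_2$ consist of one center joined to $k/2$ paths of length $2$, so $\Delta = k/2$, $\tau = k/2 + $ small, while a greedy matching has size $m = 2$ (one pendant edge plus one far edge), giving $w_\text{greedy} = O(k)$ by the bound $w \leq (m-1)k/2$ in the proof of Lemma~\ref{lem:matching_weigth}, hence $AR_{FF}(H_2) = O(k) = o((\tau+1)k)$, which beats $c(\tau+1)k$ for every fixed $c > 0$ once $k$ is large.

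\textbf{Main obstacle.} The delicate point is the second construction: I must choose $H_2$ so that simultaneously $\tau$ is of order $k$ (forcing $(\tau+1)k = \Theta(k^2)$) and $w_\text{greedy}(H_2)$ is $o(k^2)$, ideally $O(k)$. This requires the greedy matching of the optimal rainbow coloring to be short, which is exactly the content of the bound $w_\text{greedy}(H) \leq (m-1)k/2$ where $m$ is the greedy matching size; so the construction succeeds precisely when $H_2$ has a small maximum "greedy-type" matching while still having a large vertex cover — a subdivided star is the cleanest such family, and verifying $m = O(1)$ and $\tau = \Theta(k)$ for it is the one step worth writing out carefully.
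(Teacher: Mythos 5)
Your overall strategy---controlling $AR_{FF}$ via $w_{\max}$ and $w_{\text{greedy}}$ from Lemma~\ref{lem:matching_weigth}---is the right one and is what the paper does, but both of your concrete constructions fail, and for the same structural reason.

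For $H_1$ you want $\Delta$ small, but that is the wrong direction. With $\Delta=2$ the target is $c k^2/\Delta = ck^2/2$, while $AR_{FF}(P_k)=\tfrac{k^2}{8}(1+o(1))$ by Theorem~\ref{thm:path-FF-UB}; the ratio is pinned at $1/4$ and letting $k\to\infty$ does not help, since both quantities are $\Theta(k^2)$ with fixed leading constants. More generally, $AR_{FF}(H)\le(\tau+1)k\le(k+1)k$, so for any forest with $\Delta$ bounded the ratio $AR_{FF}(H)/(k^2/\Delta)$ is at most roughly $\Delta$; no disjoint union of paths or bounded-degree spider can beat an arbitrary $c$. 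The observation is really saying that $k^2/\Delta$ is \emph{not} an upper bound up to a constant, and to witness this you must make $\Delta$ large while keeping the forced matching weight quadratic in a parameter unrelated to $\Delta$. The paper takes $H_1=H(x,x)$, a star on $x$ edges (which inflates $\Delta$ to $x$, deflating $k^2/\Delta$ to $4x$) disjoint from a matching on $x$ edges (whose $x$ edges must receive distinct colors, forcing $w_{\max}\ge\binom{x+1}{2}$ and hence $AR_{FF}\ge x^2/4$).

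For $H_2$ your subdivided star also fails. Its $k/2$ pendant edges $a_ib_i$ form a matching, so in every rainbow coloring that matching has weight at least $1+2+\cdots+k/2=\binom{k/2+1}{2}$, giving $AR_{FF}(H_2)\ge w_{\max}/2\ge k^2/16\ge\tfrac18(\tau+1)k$; the bound $AR_{FF}(H_2)\le c(\tau+1)k$ is therefore impossible for $c<1/8$ in this family. Your claim that the greedy matching has size $m=2$ is also incorrect: each pendant edge $a_ib_i$ is adjacent only to $v_0a_i$, and at most one center edge can enter any matching, so every greedy (indeed every maximal) matching contains at least $k/2-1$ pendant edges. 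The correct example again pairs a huge star with a small matching: $H_2=H(y^2,y)$ has $\tau=y+1$ and $k=y^2+y$, so $(\tau+1)k\approx y^3$, while the star contributes only one edge (of color at most $k$) to the greedy matching and the matching part contributes weight $O(y^2)$, giving $AR_{FF}(H_2)\le 2w_{\text{greedy}}\le 3y^2=o((\tau+1)k)$.
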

\begin{proof}
 We consider the graph $H(x,y)$ that is the union of a star on $x$ edges and a matching on $y$ edges for sufficiently large $x$ and $y$. Then   $k = |E(H)| = x+y$, $\Delta = \Delta(H) = x$ and $\tau = \tau(H) = y+1$. It is straightforward to argue that $w_\text{max}(H) = w_\text{greedy}(H) = x+y + \sum_{i=1}^y i$. Thus, by Lemma~\ref{lem:matching_weigth} we have $x/2 + y^2/4 \leq AR_{FF}(H) \leq 2x + y^2$.
  Let $H_1= H(x, x)$ with   $x \geq 16c$.  Then  $AR_{FF}(H_1) \geq x^2/4 \geq 4cx \geq ck^2/\Delta$.
 Let $H_2= H(y^2, y)$,  with   $y \geq 3/c$.  Then $AR_{FF}(H_2) \leq 3y^2 \leq cy^3 \leq c(\tau+1)k$.
\end{proof}

Next we consider the First-Fit anti-Ramsey numbers of some specific graphs $H$, such as the path $P_k$, the matching $M_k$ and the complete graph $K_n$. From Lemma~\ref{lem:matching_weigth} we get $AR_{FF}(P_k) \leq k^2/2 + k$ and $AR_{FF}(M_k) \leq k^2 + k$. However, in both cases we can do better by a factor of $4$ (c.f. Theorem~\ref{thm:path-FF-UB} and Theorem~\ref{thm:matching-FF-UB}), which is best-possible.

\begin{theorem}\label{thm:path-FF-UB}
 For all $k \geq 1$,  $AR_{FF}(P_k) = \frac{k^2}{8}(1 + o(1))$.
\end{theorem}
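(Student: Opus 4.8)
The plan is to prove matching upper and lower bounds of $\tfrac{k^2}{8}(1+o(1))$ for $AR_{FF}(P_k)$, using the First-Fit Lemma (Lemma~\ref{lem:first-fit-lemma}) throughout, so that the task reduces to (a) constructing a small $P_k$-good graph and (b) showing every $P_k$-good graph has many edges. For the lower bound, Lemma~\ref{lem:matching_weigth} already gives $AR_{FF}(P_k) \ge w_{\text{max}}(P_k)/2$, and since $\chi'(P_k)=2$ one has $w_{\text{max}}(P_k) \ge \binom{k+1}{2}/2 \ge k^2/4(1+o(1))$, hence $AR_{FF}(P_k) \ge k^2/8(1+o(1))$. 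So the genuinely new content is the upper bound, and I would state the lower half as an immediate consequence of the already-proven Lemma~\ref{lem:matching_weigth} with at most a sentence of justification.

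For the upper bound I would construct an explicit $P_k$-good graph $G$ on roughly $k^2/8$ edges. Take a path $Q = (f_1, f_2, \ldots, f_k)$ and two-color it alternately, but not uniformly: the idea is to assign colors so that the greedy matching picks up total weight only about $k^2/8$ rather than $k^2/4$. Concretely, order the edges of $Q$ so that consecutive edges of a matching $M \subseteq E(Q)$ with $|M| = k/2$ receive the colors $k, k-1, \ldots, k/2+1$ in a pattern that makes $M$ itself greedy — assign the largest remaining color to an edge of $M$, then its two neighbors get the next smaller colors, and continue alternately. Then, following the construction in the proof of Lemma~\ref{lem:matching_weigth}, build $G$ from $Q$ by attaching to each endpoint $v$ of each edge $e \in M$ a pendant edge of color $c$ for every color $c < c(e)$ not yet present at $v$. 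The number of added edges is $\sum_{e\in M} (\text{at most }2(c(e)-1))$, and with the colors on $M$ being roughly $k, k-1, \ldots, k/2$ arranged so that the actual "missing colors" count is about $c(e)/2$ on average (because $Q$ itself already supplies many of the smaller colors along the path near $e$), the total comes out to $\tfrac14\sum_{e\in M} c(e) (1+o(1)) \approx \tfrac14 \cdot \tfrac{3}{8}k^2$... — here I would need to be careful and instead choose the coloring of $Q$ so that $M$ carries exactly the top $k/2$ colors $k/2+1, \ldots, k$, giving $\sum_{e\in M}c(e) = \tfrac38 k^2(1+o(1))$ and hence roughly $\tfrac34 k^2$ pendant edges, which is too many. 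The correct move is to interleave: let $M$ carry colors $k, k-2, k-4, \ldots$ so that between two matching edges the intermediate path edges already realize the odd colors, so very few pendants are needed; a clean bookkeeping then yields $|E(G)| = \tfrac{k^2}{8}(1+o(1))$.

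Alternatively, and more robustly, I would prove the upper bound by a direct Builder/Painter-against-First-Fit argument or, cleaner still, by exhibiting the optimal $P_k$-good coloring abstractly: characterize which color sequences along a path are realizable under First-Fit with few support edges, set up the resulting optimization (minimize edges $= $ path edges $+$ support edges subject to containing a rainbow $P_k$), and show the optimum is $k^2/8(1+o(1))$ by a convexity/averaging computation — this is essentially sharpening Lemma~\ref{lem:matching_weigth}'s upper bound from $2w_{\text{greedy}}$ down to $w_{\text{greedy}} + O(k)$ in the special case that $H$ is a path, exploiting that a path has a Hamiltonian-type structure letting each support edge be reused implicitly. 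The lower bound must then also be sharpened from $w_{\text{max}}/2$: I would argue that in a $P_k$-good graph $G$ the rainbow $P_k$ uses colors summing to $\ge \binom{k+1}{2}$, its max-weight matching has weight $\ge \tfrac12\binom{k+1}{2}$, and each matching edge $e$ needs $c(e)-1$ supports that are charged to few edges — yielding $|E(G)| \ge \tfrac14 k^2(1+o(1))$? No: to reach $k^2/8$ I only need $w_{\text{max}}/2$, which Lemma~\ref{lem:matching_weigth} already delivers, so the lower bound is genuinely the easy direction and only the construction needs care.

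The main obstacle is the constant in the construction: naively attaching all missing-color pendants to the matching edges of a two-colored path gives $\Theta(k^2)$ with constant $\tfrac14$ or worse, not $\tfrac18$. The trick — which I expect to be the crux — is to choose the $2$-coloring of the underlying path $P_k$ so that the path edges themselves serve as the support edges for one another: arrange the colors so that near each high-colored edge the path already exhibits a long prefix $1, 2, \ldots$ of colors, so that each edge of color $i$ genuinely needs only $O(1)$ extra pendant edges rather than $i-1$ of them, and the total number of genuinely-new edges telescopes to $\tfrac18 k^2(1+o(1))$. Verifying that such a coloring is simultaneously proper, satisfies condition~\ref{item:support} of Definition~\ref{def:H-good}, and contains a rainbow $P_k$, while the edge count is $\tfrac{k^2}{8}(1+o(1))$, is the technical heart of the proof; I would present it as an explicit layered construction with a figure, analogous to Figure~\ref{fig:path_size_UB}, and a short counting lemma.
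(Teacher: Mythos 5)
Your lower bound is exactly the paper's: $AR_{FF}(P_k)\ge w_{\max}(P_k)/2\ge k^2/8$ follows from Lemma~\ref{lem:matching_weigth} with $\chi'(P_k)=2$, and nothing more is needed there. The genuine gap is in the upper bound, which you correctly identify as the crux but do not establish. All of your concrete sketches live inside the ``rainbow path plus pendant edges'' framework, and that framework provably cannot reach $k^2/8$: if $G$ consists of a rainbow path $Q=(f_1,\dots,f_k)$ with colors $c_1,\dots,c_k$ plus pendants, then condition~\ref{item:support} of Definition~\ref{def:H-good} forces $f_j=v_jv_{j+1}$ to see the colors $1,\dots,c_j-1$ on edges incident to it, and those incident edges are only $f_{j-1}$, $f_{j+1}$ and the pendants at $v_j$ and $v_{j+1}$. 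Writing $p_v$ for the number of pendants at $v$, this gives $p_{v_j}+p_{v_{j+1}}\ge c_j-3$; summing over $j$ (each vertex is counted at most twice) yields at least $\tfrac12\bigl(\binom{k+1}{2}-3k\bigr)=k^2/4-O(k)$ pendant edges, no matter how cleverly you permute the colors along $Q$. Your hope that ``the path edges themselves serve as the support edges for one another'' cannot be realized, because supports must be \emph{incident} to the edge they support and a path edge has only two path neighbours.

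The missing idea---and the actual content of the paper's proof---is to use \emph{chords} of the path as shared supports: an edge $v_sv_t$ between non-consecutive path vertices is incident to up to four path edges and hence supports four of them at once, which is exactly the factor of two separating $k^2/4$ from $k^2/8$. Concretely, the paper builds $G_i\supset G_{i-1}$ inductively, adding at step $i$ the path edge $v_iv_{i+1}$, the chords $v_{2j}v_{i-2j}$ for $j\le\lfloor(i-2)/4\rfloor$ (all of color $i$, consistent with the invariant $c_i(v_sv_t)=s+t$), and at most two pendants, i.e.\ about $i/4$ new edges per step, summing to $k^2/8+O(k)$; the coloring is then reversed so that the colors needing much support are exactly those whose supports have been accumulated. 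Your alternative ``optimization/convexity'' route, and in particular the proposed sharpening of Lemma~\ref{lem:matching_weigth} from $2w_{\text{greedy}}$ to $w_{\text{greedy}}+O(k)$, are asserted rather than argued, so as written the upper bound is not proved.
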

\begin{proof}
 The lower bound follows directly from Lemma~\ref{lem:matching_weigth}.
For the upper bound we shall construct a $P_k$-good graph on at most $\frac{k^2}{8} + \mathcal{O}(k)$ edges. This graph is defined inductively. In particular, for every $i \geq 1$ we construct a $P_i$-good graph $G_i$ which contains $G_{i-1}$ as a subgraph. The graph $G_1$ consists of a single edge and for $i>1$ the graph $G_i$ is constructed  from $G_{i-1}$ by adding three new vertices and at most $\lfloor \frac{i-2}{4} \rfloor + 2$ new edges. Thus
 $$AR_{FF}(P_k)
  \leq |E(G_k)| 
  \leq \sum_{i=1}^{k}(\lfloor\frac{i-2}{4}\rfloor+2) 
  \leq \frac{1}{4}\binom{k+1}{2} + k
  \leq \frac{k^2}{8} + 2k.$$
 
 We prove the existence of $G_i$ with the following stronger induction hypothesis.
 
 \begin{claim*}
  For every $i\geq 1$ there exists a graph $G_i$ on at most $\frac{i^2}{8} + \mathcal{O}(i)$ edges together with a fixed proper coloring $c_i$ using colors $1,\ldots,i$ and containing a rainbow path $Q_i = (v_1,\ldots,v_i)$ with respect to these colors, such that the \emph{reverse $\hat{c_i}$ of $c_i$}, which is defined as $\hat{c_i}(e) = i+1 - c_i(e)$ for every $e \in E(G_i)$, is $P_i$-good. Moreover, for every edge $e = v_sv_t$ not on $Q_i$ but with both endpoints on $Q_i$ we have $c_i(e) = s+t$.
 \end{claim*}

 We start by defining $G_1$ to be just a single edge $e_1=v_1v_2$ with $c_1(e_1) = 1$. Having defined $G_{i-1}$ we define the graph $G_i$ as being a supergraph of $G_{i-1}$. For every edge $e \in E(G_{i-1}) \subset E(G_i)$ we set $c_i(e) = c_{i-1}(e)$, i.e., edges in the subgraph $G_{i-1}$ have the same color in $c_i$ as in $c_{i-1}$. The vertices in $V(G_i) \setminus V(G_{i-1})$ are denoted by $u_i,u_i'$ and $v_{i+1}$. The edges in $E(G_i) \setminus E(G_{i-1})$ form a matching and are listed below. Each such edge is assigned color $i$ in $c_i$.
 \begin{itemize}
  \item The edge $e_i = v_iv_{i+1}$ extends the path $Q_{i-1}$ to the longer path $Q_i$.
   
  \item The edges $v_{2j}v_{i-2j}$ for $j = 1,\ldots,\lfloor\frac{i-2}{4}\rfloor$. These edges do not coincide with any edge on $Q_i$ since their endpoints have distance at least two on $Q_i$. Moreover, since $c_i(v_{2j}v_{i-2j}) = i = 2j + (i-2j)$ these edges do not coincide with any edge in $E(G_{i-1}) \setminus E(Q_i)$.
  
  \item There are up to two edges $e_i',e_i''$, each incident to a vertex $v_j$ with $j$ even and $2\lfloor\frac{i-2}{4}\rfloor < j < i-2\lfloor\frac{i-2}{4}\rfloor$. The other endpoint of $e_i'$ and $e_i''$ is $u_i$ and $u_i'$, respectively.
 \end{itemize}

 \begin{figure}[htb]
  \centering
  \includegraphics{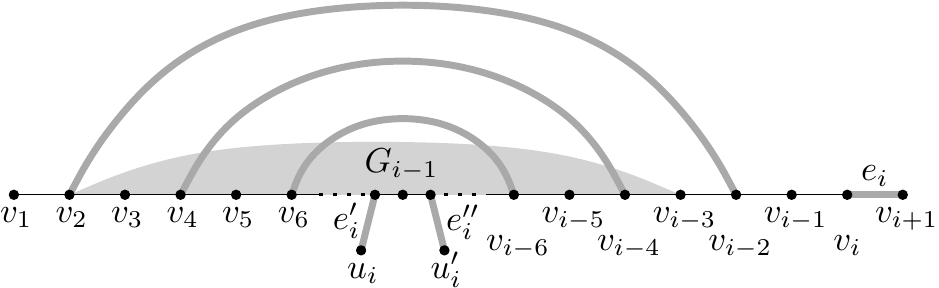}
  \caption{Definition of the graph $G_i$ on basis of $G_{i-1}$: All edges in $E(G_i) \setminus E(G_{i-1})$ are drawn thick and have color $i$ in $c_i$.}
  \label{fig:path-FF-UB}
 \end{figure}
 
 See Figure~\ref{fig:path-FF-UB} for an illustration of how the graph $G_i$ arises from $G_{i-1}$. It is easy to see that for $j=1,\ldots,i$ we have $c_i(e_j) = j$, i.e., that $Q_i$ is a rainbow copy of $P_i$ with respect to  $c_i$.
 
 \smallskip

 The rest of the proof is a  routine check that  the reverse $\hat{c_i}$ of $c_i$ is $P_i$-good, i.e., that for every edge $f\in E(G_i)$ of color $j$ in $c_i$ there exist edges $e_{j+1},\ldots,e_i$ adjacent to $f$ with colors $j+1,\ldots,i$ in $c_i$, respectively. This is clearly the case for $G_1$, which consists of a single edge. Assuming that the reverse of $c_{i-1}$ in $G_{i-1}$ is $P_{i-1}$-good, it then suffices to show that every edge in $E(G_{i-1})$ is incident to at least one edge of color $i$ in $c_i$, that is, at least one edge from $E(G_i) \setminus E(G_{i-1})$.

 First consider an edge $e_j$ from $Q_{i-1}$. If $e_j$ is incident to a vertex $v_s$ with $s$ even and $s \leq 2\lfloor\frac{i-2}{4}\rfloor$ then $e_j$ is incident to the edge $v_sv_{i-s}$ of color $s + (i-s) = i$. If $e_j$ is incident to a vertex $v_s$ with $s$ even and $2\lfloor\frac{i-2}{4}\rfloor < s < i-2\lfloor\frac{i-2}{4}\rfloor$ then $e_j$ is incident to $e$ or $e'$, both of color $i$. If $e_j = e_{i-1}$ then it is incident to $e_i$. If none of the cases applies then $e_j$ is incident to a vertex $v_{i-2s}$ with $s \in \{1,\ldots,\lfloor\frac{i-2}{4}\rfloor\}$ and hence incident to the edge $v_{2s}v_{i-2s}$ of color $2s + (i -2s) = i$.

 Finally, consider any edge $f \in E(G_{i-1}) \setminus E(Q_i)$. Then $f$ is incident to some vertex $v_{2j}$ with $j < c_i(e)-2\lfloor\frac{c_i(e)-2}{4}\rfloor \leq i-2\lfloor\frac{i-2}{4}\rfloor$. Hence $f$ is incident to either $e$, $e'$ or the edge $v_{2j}v_{i-2j}$, all of which have color $i$.
\end{proof}


\begin{theorem}\label{thm:cycle-FF-UB}
 For all $k\geq 3$, $AR_{FF}(C_k) = \frac{k^2}{8}(1 + o(1))$.
\end{theorem}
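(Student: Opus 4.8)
The plan is to mirror the inductive construction used for $AR_{FF}(P_k) = \frac{k^2}{8}(1+o(1))$ in Theorem~\ref{thm:path-FF-UB}, now producing a $C_k$-good graph rather than a $P_k$-good graph. The lower bound $AR_{FF}(C_k) \geq \frac{k^2}{8}(1+o(1))$ is immediate from Lemma~\ref{lem:matching_weigth}, since $\chi'(C_k) \leq 3$ gives $w_\text{max}(C_k)/2 \geq k^2/(4\chi') = \frac{k^2}{12}$; but in fact one checks directly that in any rainbow coloring of $C_k$ the heaviest matching has weight $\frac{k^2}{8}(1+o(1))$ (the $k/2$ largest colors $k, k-1, \dots, k-k/2+1$ can essentially all be collected into an alternating matching of an even cycle, summing to $\frac{3k^2}{8}$; more carefully, the minimum over rainbow colorings is achieved near $\frac{k^2}{8}$), so $AR_{FF}(C_k) \geq w_\text{max}(C_k)/2 = \frac{k^2}{8}(1+o(1))$. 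So the real work is the matching upper bound.

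For the upper bound I would construct, by induction on $k$, a $C_k$-good graph $G_k$ on $\frac{k^2}{8} + \mathcal{O}(k)$ edges. The idea is to reuse the path construction: take the graph from Theorem~\ref{thm:path-FF-UB} that is $P_{k-1}$-good, but carry the stronger induction hypothesis so that the rainbow $P_{k-1}$ it contains, say $Q_{k-1} = (v_1, \dots, v_{k-1})$ with colors $1, \dots, k-1$ on its edges, has its two endpoints $v_1$ and $v_{k-1}$ available to receive one more edge $e_k = v_{k-1}v_1$ of color $k$, closing $Q_{k-1}$ into a rainbow $C_k$. One then adds, exactly as in the path case, a bounded number of further color-$k$ edges (at most $\lfloor \frac{k-2}{4}\rfloor + \mathcal{O}(1)$ of them, forming a matching together with $e_k$) so that every edge of $G_{k-1}$ that was already "supported up to color $k-1$" is now incident to some edge of color $k$; reversing the coloring then certifies that $G_k$ is $C_k$-good via the First-Fit Lemma. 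Summing $\lfloor \frac{k-2}{4}\rfloor + \mathcal{O}(1)$ over $k$ gives $\frac{1}{4}\binom{k+1}{2} + \mathcal{O}(k) = \frac{k^2}{8} + \mathcal{O}(k)$, as desired.

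The main obstacle is bookkeeping around the endpoints of the rainbow cycle. In the path construction, the endpoints $v_1$ and $v_i$ of $Q_i$ are "free" — low-degree on the path — which is exactly what lets the inductive step attach new color-$i$ edges there; for cycles we need the closing edge $v_{k-1}v_1$ to have color $k$ and simultaneously to not conflict with the color classes already present at $v_1$ and $v_{k-1}$, and we must ensure the added support edges (the analogues of the $v_{2j}v_{i-2j}$ chords and the pendant edges $e_i', e_i''$) can still be placed without creating an improper coloring near the now-identified cycle vertices. Concretely, I would keep the invariant that in $c_{k-1}$ the colors appearing at $v_1$ and at $v_{k-1}$ are disjoint from $\{k\}$ (trivially true, only colors $\le k-1$ are used) and that $v_1, v_{k-1}$ are each incident to only $\mathcal{O}(1)$ edges, so closing the cycle costs one edge and leaves room for the matching of color-$k$ support edges. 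The parity/indexing of the chord edges $v_{2j}v_{k-2j}$ must also be re-examined because the two endpoints $v_1$ and $v_{k-1}$ of $Q_{k-1}$ now coincide under the cyclic structure, but since these chords all join even-indexed vertices this does not interfere. Once this invariant is maintained, the verification that $\hat{c_k}$ is $C_k$-good is the same routine case-check as in Theorem~\ref{thm:path-FF-UB}: every edge of $G_{k-1}$ lies on $Q_{k-1}$ or is a previously-added chord/pendant, and in each case one exhibits an incident color-$k$ edge among $e_k$ and the new matching.
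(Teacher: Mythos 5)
Your lower bound, as written, has a genuine gap. Lemma~\ref{lem:matching_weigth} gives $AR_{FF}(C_k)\geq k^2/(4\chi')$, which for odd $k$ (where $\chi'(C_k)=3$) is only $k^2/12$, and your fallback via $w_{\max}(C_k)$ is internally inconsistent: you assert that the heaviest matching in any rainbow coloring has weight $\tfrac{k^2}{8}(1+o(1))$, which through the lemma would yield only $AR_{FF}\geq k^2/16$, while your concluding line needs $w_{\max}(C_k)=\tfrac{k^2}{4}(1+o(1))$. The ``$3k^2/8$'' computation is also unsound, since it presumes the top $k/2$ colors form a matching, which fails already for the coloring $1,2,\ldots,k$ read around the cycle. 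The clean fix --- and what the paper actually does --- is to observe that any graph forcing a rainbow $C_k$ under First-Fit also forces a rainbow $P_{k-1}$ at that moment, so $AR_{FF}(C_k)\geq AR_{FF}(P_{k-1})=\tfrac{(k-1)^2}{8}(1+o(1))$ by Theorem~\ref{thm:path-FF-UB}. (Alternatively, one can show $w_{\max}(C_k)\geq \tfrac{k(k-1)}{4}$ for all $k$ by deleting the lowest-colored edge and splitting the remaining rainbow path into two matchings; but this argument has to be made, not just asserted.)

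For the upper bound your route is workable but genuinely different from, and much heavier than, the paper's. The paper does not re-run the inductive construction for cycles: it takes the $P_{k-2}$-good graph from Theorem~\ref{thm:path-FF-UB} with forced rainbow path $P$ and endpoints $u,v$, attaches $2k$ new vertices each joined to both $u$ and $v$ ($4k$ extra edges), and notes that at most $2(k-2)$ of these edges can receive a color already on $P$; hence some new vertex $w$ has both its edges colored with fresh (and, by properness at $w$, distinct) colors, closing $P$ into a rainbow $C_k$ at a cost of only $\mathcal{O}(k)$ additional edges and with no re-verification of condition~\ref{item:support}. Your plan of replacing the final path edge by the closing edge $v_kv_1$ and re-checking that every old edge meets a color-$k$ edge can be pushed through (the chords $v_{2j}v_{k-2j}$ indeed avoid $v_1$ and $v_k$, and the closing edge gets color $1$ in the reversed coloring, so it needs no support of its own), but you have not carried out that verification, and the paper's reduction makes it unnecessary.
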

\begin{proof}
 The lower bound follows from Theorem~\ref{thm:path-FF-UB} and the fact that $AR_{FF}(C_k) \geq AR_{FF}(P_{k-1})$.
 For the upper bound we first use Theorem~\ref{thm:path-FF-UB} to force a rainbow copy  $P$ of  $P_{k-2}$. Let its end vertices be $u$ and $v$.  
 Now we introduce a set $A$ of $2k$  new vertices  and all edges between $A$ and $\{u,v\}$.  Each vertex in $A$  together with $P$ forms  a copy of $C_k$.
 There are at most $2(k-2)$ edges between $A$ and $\{u,v\}$ of colors from $P$. Thus there is a vertex $w\in A$ such that the colors of $uw$ and $uv$ do not 
 appear in $P$. Thus $P$ and $w$ induce a rainbow $C_k$.
\end{proof}

\begin{theorem}\label{thm:matching-FF-UB}
 For all $k \geq 1$,  $AR_{FF}(M_k)=\lceil\frac{k^2}{4}\rceil$.
 Moreover, there is a bipartite  $M_k$-good graph on $\lceil\frac{k^2}{4}\rceil$ edges.
\end{theorem}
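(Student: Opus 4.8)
I would prove the two bounds separately: the lower bound via the First-Fit Lemma and a counting argument, and the upper bound via an explicit bipartite construction.

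\textbf{Lower bound.} By Lemma~\ref{lem:first-fit-lemma} it is enough to bound below the number of edges of an $M_k$-good graph $G$ with an $M_k$-good coloring $c$. Fix a rainbow copy $\{a_1,\dots,a_k\}$ of $M_k$; its $k$ edges get pairwise distinct colors, so I may index them so that $c(a_i)\ge i$. For every $i$ and every color $j\le i-1$ we have $j<c(a_i)$, so by~\ref{item:support} some edge of color $j$ is incident to $a_i$, and — the matching edges being pairwise vertex-disjoint — that edge is none of $a_1,\dots,a_k$. Hence for each color $j\in\{1,\dots,k-1\}$ the set $F_j$ of non-matching edges of color $j$, which is a matching because $c$ is proper, is incident to every one of $a_{j+1},\dots,a_k$; since a single edge of $F_j$ meets at most two of these $k-j$ pairwise disjoint edges, $|F_j|\ge\lceil(k-j)/2\rceil$. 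As the $F_j$ and $\{a_1,\dots,a_k\}$ are pairwise disjoint edge sets,
\[
 |E(G)|\ \ge\ k+\sum_{j=1}^{k-1}\Big\lceil\frac{k-j}{2}\Big\rceil\ =\ k+\Big\lfloor\frac{k^2}{4}\Big\rfloor .
\]

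\textbf{Upper bound.} For the construction, start from a rainbow matching $a_i=x_iy_i$ with $c(a_i)=i$, the $x_i$ on one side and the $y_i$ on the other. Call $x_i$ a \emph{hub} when $2\le i$ and $i\equiv k\pmod2$, and attach to each hub $x_i$ exactly one edge of each color $1,\dots,i-1$, all with the other endpoint on the $y$-side; route these hub edges so that every non-hub matching edge $a_j$ finds, among the hub edges landing on $y_j$, one edge of each color $1,\dots,j-1$ (a short Hall-type argument shows this is possible, with at most one surplus hub edge per color parity, which one sends to a fresh pendant vertex). A hub $x_i$ then sees exactly the colors $\{1,\dots,i\}$, so~\ref{item:support} holds automatically for $a_i$ and for every hub edge at $x_i$; a non-hub $a_j$ sees $1,\dots,j-1$ at $y_j$, so~\ref{item:support} holds for it as well; and $a_1,\dots,a_k$ is a rainbow copy of $M_k$. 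The graph is bipartite by construction, and summing $i-1$ over all hubs $x_i$ gives $\lfloor k^2/4\rfloor$, so the total is $k+\lfloor k^2/4\rfloor$ edges.

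\textbf{Main obstacle.} The genuinely delicate step is the construction, and within it the clash of three demands: condition~\ref{item:support} wants each endpoint of a new edge of color $c$ to see the whole initial segment $\{1,\dots,c-1\}$ (favoring a few richly colored vertices); bipartiteness forbids edges inside a color class and so forces the support to be shipped across the bipartition; and the edge count must be exactly optimal, leaving essentially no surplus. Putting all of $a_i$'s support at a single endpoint makes~\ref{item:support} trivial but creates odd cycles among those endpoints, while distributing the colors $1,\dots,i-1$ over both endpoints of $a_i$ keeps the graph bipartite and the count tight but leaves support edges unsupported. Making the hub routing precise — an edge-disjoint, properly colored family of bipartite matchings realizing the prescribed color profile, together with the parity corrections needed when $\binom{k}{2}$ is odd — is where the real work lies.
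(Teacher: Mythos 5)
Your lower bound is correct, and it is in fact a sharper argument than the one the paper uses: the paper simply invokes Lemma~\ref{lem:matching_weigth}, whereas your per-colour counting (each colour class $F_j$, $j\le k-1$, is a matching of non-matching edges that must touch every one of the pairwise disjoint edges $a_{j+1},\dots,a_k$) gives the exact bound $|E(G)|\ge k+\lfloor k^2/4\rfloor$. Note, however, that this quantity exceeds the displayed value $\lceil k^2/4\rceil$ for every $k\ge 2$ (already $AR_{FF}(M_2)=3$, not $1$), and your upper-bound construction is also aimed at $k+\lfloor k^2/4\rfloor$ edges; so even if both halves were complete you would have proved $AR_{FF}(M_k)=k+\lfloor k^2/4\rfloor$, not the stated formula. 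You never reconcile this. (For what it is worth, the paper's own inductive construction also has $k+\lfloor k^2/4\rfloor$ edges --- the number of colour-$j$ edges is $\lceil\frac{j-1}{2}\rceil+1$ and these sum to $k+\lfloor k^2/4\rfloor$, not $\lceil k^2/4\rceil$ --- so your count is the one consistent with the construction; but a proof cannot silently end with a different total than the one claimed, and this mismatch had to be flagged.)

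The genuine gap is the upper bound: the hub routing you propose is infeasible in a simple graph, and no Hall-type argument can repair it, because it fails a plain degree count rather than a parity adjustment. You require every non-hub vertex $y_j$ to receive one hub edge of each colour $1,\dots,j-1$; in a simple graph these $j-1$ edges need $j-1$ pairwise distinct hub endpoints, yet there are only $\lfloor k/2\rfloor$ hubs. Already for $k=5$ the vertex $y_4$ needs three distinct hub neighbours while only $x_3$ and $x_5$ are hubs, and in general $j-1>\lfloor k/2\rfloor$ for $j$ close to $k$, so the scheme breaks for all $k\ge 5$. The paper's construction avoids exactly this concentration: it builds $G_i\supseteq G_{i-1}$ by adding, for the new colour $i$, the matching edge $e_i=u_iv_i$, the edges $u_jv_{i-j}$ for $j\le\lfloor\frac{i-1}{2}\rfloor$, and at most one pendant edge, so that each new colour-$i$ edge supports \emph{two} of the earlier matching edges at once; and instead of making every edge see all smaller colours directly, it verifies condition~\ref{item:support} for the \emph{reverse} colouring $\hat c_i(e)=i+1-c_i(e)$, i.e.\ it inductively ensures every edge is incident to an edge of each larger colour and then reverses, invoking Lemma~\ref{lem:first-fit-lemma}. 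If you want to keep your framework, you must either adopt this reverse-colouring/pairwise-covering idea or allow the colour-$c$ support of $a_j$ to sit at either endpoint with each colour class forming a matching that covers two matching edges per edge; concentrating all support of $a_j$ at a single vertex and sourcing it from $O(k/2)$ hubs cannot meet the degree requirements.
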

\begin{proof}
 The lower bound follows immediately from Lemma~\ref{lem:matching_weigth}. 
  For the upper bound, which is similar to the one in the proof of Theorem~\ref{thm:path-FF-UB}, we shall prove the following stronger claim by induction on $i$.


 \begin{claim*}
  For every $i\geq 1$ there exists a bipartite graph $G_i$ with $\lceil\frac{i^2}{4}\rceil$ edges together with a fixed proper coloring $c_i$ using colors $1,\ldots,i$ and containing a rainbow matching $Q_i = \{e_1,\ldots,e_i\}$, $e_i = u_iv_i$, with respect to these colors, such that the reverse $\hat{c_i}$ of $c_i$ is $M_i$-good. Moreover, for every edge $e=u_sv_t$ not in $Q_i$ but with both endpoints in $Q_i$ we have $c_i(e) = s+t$.
 \end{claim*}
 
 The graph $G_1$ consists of just a single edge $e_1 = u_1v_1$ with $c_1(e_1) = 1$ and $Q_1 = \{e_1\}$. Having defined $G_{i-1}$ we define the graph $G_i$ to be a supergraph of $G_{i-1}$, where we set $c_i(e) = c_{i-1}(e)$ for each edge $e \in E(G_{i-1}) \subset E(G_i)$ and $c_i(e) = i$ for each edge $e \in E(G_i) \setminus E(G_{i-1})$. The vertices in $V(G_i) \setminus V(G_{i-1})$ are denoted by $u_i,v_i$ and $w_i$. The edges in $E(G_i)\setminus E(G_{i-1})$ form a matching and are listed below.
 \begin{itemize}
  \item The edge $e_i = u_iv_i$ extends the matching $Q_{i-1}$ to the larger matching $Q_i$.
  
  \item The edges $u_jv_{i-j}$ for $j=1,\ldots,\lfloor\frac{i-1}{2}\rfloor$. These edges do not coincide with any edge of $Q_i$ since the indices of endpoints differ by at least one. Moreover, since $c_i(u_jv_{i-j}) = j + (i-j) = i$ these edges do not coincide with any edge in $E(G_{i-1})\setminus E(Q_i)$.
 
  \item In case $\lfloor\frac{i-1}{2}\rfloor < i-\lfloor\frac{i-1}{2}\rfloor-1$ there is an edge $e= u_jw_i$ with $j = \lfloor\frac{i-1}{2}\rfloor+1$.
 \end{itemize}
 \begin{figure}[htb]
  \centering
  \includegraphics{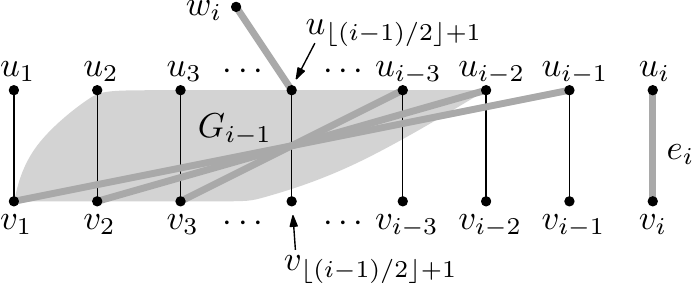}
  \caption{Definition of the graph $G_i$ on basis of $G_{i-1}$: All edges in $E(G_i) \setminus E(G_{i-1})$ are drawn thick and have color $i$ in $c_i$.}
  \label{fig:matching-FF-UB}
 \end{figure}

 See Figure~\ref{fig:matching-FF-UB} for an illustration of how the graph $G_i$ arises from $G_{i-1}$. Similarly to the proof of Theorem~\ref{thm:path-FF-UB} we argue that the reverse of $c_i$ is $M_i$-good by showing that every edge $e \in E(G_{i-1})\subset E(G_i)$ is incident to at least one edge of color $i$ in $c_i$.
 
 First consider an edge $e_j =u_jv_j$ from $Q_{i-1}$. If $j \leq \lfloor\frac{i-1}{2}\rfloor$ then it is incident to the edge $u_jv_{i-j}$ of color $i$. If $j \geq i-\lfloor\frac{i-1}{2}\rfloor$ then $e_j$ is incident to the edge $u_{i-j}v_j$ of color $i$. Finally, if $\lfloor\frac{i-1}{2}\rfloor < j < i-\lfloor\frac{i-1}{2}\rfloor$ then $e_j$ is incident to the edge $v_jw_i$ of color $i$.
 
 Now consider any edge $e \in E(G_{i-1})\setminus E(Q_i)$. Then $e$ is incident to some vertex $u_j$ with $j < c_i(e) - \lfloor\frac{c_i(e)-1}{2}\rfloor \leq i - \lfloor\frac{i-1}{2}\rfloor$. Hence $e$ is incident to the edge $u_jv_{i-j}$ of color $i$.
 
 Finally, note that the number of edges of color $j$ in $c_j$ is exactly $\lceil\frac{j-1}{2}\rceil + 1$. Thus $|E(G_i)| = \sum_{j=1}^i (\lceil\frac{j-1}{2}\rceil + 1) = \lceil \frac{i^2}{4}\rceil$. Moreover, every edge in $G_i$ is incident to one vertex with a $u$-label and one vertex with a $v$-label or $w$-label, that is, $G_i$ is bipartite.
\end{proof}

For $H = K_n$ we get from Lemma~\ref{lem:matching_weigth} that $\frac{n^3}{16} \leq AR_{FF}(K_n) \leq \frac{n^3}{2}$. However, we can do better and indeed determine the value up to lower order terms exactly.

\begin{theorem}
 For all $n \geq 2$,  $AR_{FF}(K_n) = \frac{n^3}{6}(1 + o(1))$.
\end{theorem}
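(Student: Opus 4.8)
The plan is to invoke the First-Fit Lemma (Lemma~\ref{lem:first-fit-lemma}), so that it suffices to determine, up to lower-order terms, the minimum number of edges of a $K_n$-good graph; I will show this minimum is $\frac{n^3}{6}(1+o(1))$, which in particular sharpens the bounds $\frac{n^3}{16}\le AR_{FF}(K_n)\le \frac{n^3}{2}$ coming from Lemma~\ref{lem:matching_weigth}.

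For the upper bound I would construct a $K_n$-good graph $G$ with a $K_n$-good coloring $c$ by adding the vertices $v_1,\dots,v_n$ one at a time. Put $c(v_1v_2)=1$. When $v_m$ is added ($m\ge 3$), join it to each of $v_1,\dots,v_{m-1}$ by a new edge and assign these $m-1$ edges the fresh colors $\binom{m-1}{2}+1,\dots,\binom{m}{2}$ in any order, and additionally attach to $v_m$ exactly $\binom{m-1}{2}$ pendant edges (to new degree-one vertices) carrying the colors $1,\dots,\binom{m-1}{2}$. Then $c$ is proper, the final graph contains a rainbow $K_n$ on $v_1,\dots,v_n$ (its $\binom n2$ edges use the distinct colors $1,\dots,\binom n2$), and the set of colors present at $v_m$ is exactly $\{1,\dots,\binom m2\}$. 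Consequently every edge of color $i$ is either a pendant at some $v_m$ or an edge $v_jv_m$ with $j<m$, and in either case all colors $1,\dots,i-1$ occur on edges at $v_m$; hence condition~\ref{item:support} of Definition~\ref{def:H-good} holds for every edge and $G$ is $K_n$-good. Since $|E(G)|=\sum_{m=2}^{n}\big((m-1)+\binom{m-1}{2}\big)=\binom n2+\binom n3=\frac{n^3}{6}(1+o(1))$, Lemma~\ref{lem:first-fit-lemma} gives the upper bound.

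For the lower bound, let $G$ be any $K_n$-good graph with $K_n$-good coloring $c$ and a rainbow copy of $K_n$ on a vertex set $V_0=\{v_1,\dots,v_n\}$. Since $G$ is simple it has no edge outside this $K_n$ with both endpoints in $V_0$, so $|E(G)|\ge \sum_{i\in V_0}\deg_G(v_i)-\binom n2$, and it is enough to bound $\sum_{i\in V_0}\deg_G(v_i)$ from below. For an integer $v\ge 1$ let $Y_v=\{i\in V_0:\text{ some edge at }v_i\text{ has color }v\}$; then $\sum_{i\in V_0}\deg_G(v_i)=\sum_{v\ge 1}|Y_v|$, because $\deg_G(v_i)$ is the number of colors present at $v_i$. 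Let $G_v$ be the spanning subgraph of the rainbow $K_n$ formed by the edges of color greater than $v$. For an edge $v_iv_j$ of $G_v$, condition~\ref{item:support} forces color $v$ onto an edge at $v_i$ or $v_j$, so $Y_v$ is a vertex cover of $G_v$ and $|Y_v|\ge \tau(G_v)$. The $\binom n2$ colors of the $K_n$ are distinct positive integers, so $G_v$ has at least $\binom n2-v$ edges; combining this with the elementary inequality $\tau(H)\ge n-\sqrt{n^2-2\,|E(H)|}$, valid for every $n$-vertex graph $H$ since a vertex cover of size $\tau$ is incident to at most $\binom\tau2+\tau(n-\tau)$ edges, we get $|Y_v|\ge n-\sqrt{n+2v}$. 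Summing over $v=1,\dots,\binom n2$ and using $\sum_{v\le N}\sqrt{n+2v}=\frac{2\sqrt2}{3}N^{3/2}(1+o(1))$ with $N=\binom n2$, the right-hand side evaluates to $\frac{n^3}{2}(1+o(1))-\frac{n^3}{3}(1+o(1))=\frac{n^3}{6}(1+o(1))$, whence $|E(G)|\ge\frac{n^3}{6}(1+o(1))$, matching the upper bound.

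The step I expect to be the crux is pinning down the constant $\tfrac16$ in the lower bound. The extremal graph has wildly uneven degrees, $\deg(v_m)\approx\binom m2$, so the convenient estimate $\tau(G_v)\ge |E(G_v)|/n$ is off by a factor $2$ on the many graphs $G_v$ that are nearly complete; one genuinely needs the sharper cover bound $\tau(H)\ge n-\sqrt{n^2-2|E(H)|}$, after which a routine integral comparison for $\sum\sqrt{n+2v}$ produces exactly $\tfrac{n^3}{6}$. Verifying that the construction is proper and $K_n$-good, and collecting the $o(1)$ error terms, are straightforward.
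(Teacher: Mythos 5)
Your proof is correct and follows essentially the same route as the paper: the lower bound rests on the same key observation (the clique vertices missing a given color $v$ span only edges of colors at most $v$ and hence number about $\sqrt{2v}$), merely repackaged through vertex covers of the subgraphs $G_v$ and the degree-sum formula instead of counting the edges of each color directly, and the upper bound again produces a rainbow $K_n$ decorated with roughly $n^3/6$ pendant edges. The only cosmetic differences are that you build the $K_n$-good graph explicitly rather than routing the upper bound through $AR_{FF}(H)\le 2w_{\text{greedy}}(H)$ from Lemma~\ref{lem:matching_weigth}, and that the colors present at $v_m$ in your final graph properly \emph{contain} (rather than equal) $\{1,\dots,\binom{m}{2}\}$, which is all the support condition needs.
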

\begin{proof}
 By the First-Fit Lemma  it suffices to prove that every $K_n$-good graph has at least $\frac{n^3}{6}(1 + o(1))$ edges and that there is a $K_n$-good graph on at most $\frac{n^3}{6}$ edges.
 
 \smallskip

 For the lower bound let $G$ be any $K_n$-good graph and $c$ be any $K_n$-good coloring of $G$. Fix $Q$ to be any rainbow copy of $K_n$ in $G$. We denote the edges of $Q$ by $e_1,\ldots,e_k$, where $k = \binom{n}{2}$, so that $c(e_i) < c(e_j)$ whenever $i < j$. Clearly, $c(e_i) \geq i$ for every $i=1,\ldots,k$. Next, for any fixed color $i$ we give a lower bound on the number of edges of color $i$ in $G$.
 
 By~\ref{item:support} each of the $k-i$ edges $e_{i+1},\ldots,e_k$ has an adjacent edge of color $i$. Since only two vertices of $Q$ are joined by an edge of color $i$, every edge of color $i$ different from $e_i$ is incident to at most one vertex of $Q$. Moreover, if two vertices $u,v \in V(Q)$ both have \emph{no} incident edge of color $i$ then $c(uv) < i$. In particular, the subset of $V(Q)$ with no incident edge of color $i$ is a clique in the graph $(V(Q),\{e_1,\ldots,e_{i-1}\})$. Thus there are at most $\sqrt{2i}$ such vertices and hence there are at least $n-\sqrt{2i}-1$ edges of color $i$. Thus
 \begin{equation*}
  |E(G)| \geq \sum_{i=1}^{\binom{n}{2}} n - \sqrt{2i}-1 = \frac{n^3}{6}(1 + o(1)),
 \end{equation*}
 which implies $AR_{FF}(K_n) \geq \frac{n^3}{6}(1 + o(1))$.

 \smallskip
 
 For the upper bound we use the inequality $AR_{FF}(H) \leq 2w_\text{greedy}(H)$ from Lemma~\ref{lem:matching_weigth}. That is, we shall find a rainbow coloring $c$ of $H = K_n$ such that for the greedy matching $M$ for $c$ we have $\sum_{e \in M} c(e) \leq n^3/12$. We may assume without loss of generality that $n$ is even. We label the vertices of $K_n$ by $v_1,\ldots,v_n$ and define the rainbow coloring $c$ arbitrarily so that for $i=1,\ldots,n/2$
 \begin{itemize}
  \item edge $v_{2i-1}v_{2i}$ has color $\binom{2i}{2}$ and
  
  \item the colors $1,\ldots,\binom{2i}{2}$ appear in the complete subgraph of $K_n$ on the first $2i$ vertices $v_1,\ldots,v_{2i}$.
 \end{itemize}

 The greedy matching $M$ for $c$ is given by $M = \{v_{2i-1}v_{2i} \;|\; i = 1,\ldots,n/2\}$ and hence we have
 $$AR_{FF}(K_n) \leq 2 w_\text{greedy}(K_n) \leq 2 \sum_{i=1}^{n/2}\binom{2i}{2} \leq 2 \sum_{i=1}^{n/2} \frac{4i^2}{2} \leq 4 \cdot \frac{1}{3}\left(\frac{n}{2}\right)^3 = \frac{n^3}{6}.$$
\end{proof}

\section{Online Anti-Ramsey numbers}\label{sec:online}

In this section we consider the online anti-Ramsey number $AR_o$. In particular, Painter is no longer restricted to use the strategy First-Fit. By Theorem~\ref{thm:inequalities} lower bounds for $AR_{FF}$ from the previous section are also lower bounds for $AR_o$, just like upper bounds for $AR_s$ are upper bounds for $AR_o$. Indeed, these lower bounds are the best we have. However, we can improve on the upper bounds.


\begin{theorem}\label{thm:online-UB}
 For any $k$-edge graph $H$,  $AR_o(H) \leq k^2$.
  Moreover,  $AR_o(P_k) \leq \binom{k}{2} + 1$,  $AR_o(C_k) \leq \binom{k}{2} +1$, and $AR_o(K_n) \leq n^4/4.$
\end{theorem}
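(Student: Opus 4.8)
The plan is to give an explicit Builder strategy in each case. First I would note that the bound for $K_n$ is free once the general bound is proved: $K_n$ has $k=\binom{n}{2}$ edges and $\binom{n}{2}^{2}=\tfrac14 n^2(n-1)^2<\tfrac14 n^4$, so $AR_o(K_n)\le k^2<n^4/4$. It then remains to prove $AR_o(H)\le k^2$ for an arbitrary $k$-edge graph $H$ together with the sharper estimates $AR_o(P_k)\le\binom{k}{2}+1$ and $AR_o(C_k)\le\binom{k}{2}+1$. All three will rest on the same \emph{greedy build-and-retry} idea: Builder grows a rainbow subgraph one edge at a time, and whenever a newly built edge receives a colour already present in the current partial copy, Builder re-creates that edge (and, if forced, the part of the copy hanging off it) on brand-new vertices; since each failed attempt permanently forbids one more ``old'' colour at the vertex to which the attempt is attached, Painter can obstruct a fixed edge only boundedly often.

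For $P_k$ this is clean and I would do it first. Builder keeps a rainbow path $v_0v_1\cdots v_i$ whose edges carry distinct colours $c_1,\dots,c_i$. To extend it, Builder repeatedly adds a pendant edge at $v_i$ to a fresh vertex; properness already forbids $c_i$ there, so such an edge fails only by getting one of $c_1,\dots,c_{i-1}$, and every failure forbids one more colour at $v_i$. Hence after at most $i-1$ failures the next pendant edge gets a colour outside $\{c_1,\dots,c_i\}$ and the rainbow path is longer by one. Starting from a single edge, Builder uses at most $1+\sum_{i=1}^{k-1}i=\binom{k}{2}+1$ edges.

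For a general $k$-edge graph $H$ I would assume $H$ connected (otherwise one processes its components in turn, declaring every colour already used to be forbidden and counting it as ``bad'') and fix an order $e_1,\dots,e_k$ of $E(H)$ with each $H_i=\{e_1,\dots,e_i\}$ connected, maintaining inductively a rainbow copy of $H_{i-1}$ on $i-1$ colours. If $e_i$ has an endpoint not used so far, the path argument places it using at most $i$ further edges. The genuine difficulty is a \emph{back edge} $e_i=xy$ with both images $\tilde x,\tilde y$ already present, because it cannot be retried on fresh vertices. Here Builder instead retries, on fresh vertices, the \emph{earlier} edge whose colour clashed with $e_i$ — that is, rebuilds the subtree of the current copy dangling from that edge — which forbids one more old colour at the fixed vertex $\tilde x$ (or $\tilde y$); thus only $O(i)$ such rebuilds, each of size at most $i$, are needed before $e_i$ is forced onto a fresh colour. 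Organising these nested rebuilds so that the running total stays at $k^2$ rather than merely $O(k^2)$ — i.e.\ choosing the edge order and the dangling subgraph to rebuild wisely — is the step I expect to be the main obstacle.

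Finally, for $C_k$ I would build the edge $ab$ first, then grow (by the $P_k$-procedure, but treating the colour of $ab$ as one extra forbidden colour throughout) a rainbow path $a=v_0,v_1,\dots,v_{k-3}$ of length $k-3$, and then close the cycle through a single auxiliary vertex: Builder repeatedly picks a new vertex $w$, builds $v_{k-3}w$ and then $wb$, and discards $w$ to retry whenever $v_{k-3}w$ or $wb$ gets an already-used colour. Each discard forbids one more old colour at $v_{k-3}$ or at $b$, so after boundedly many tries the two closing edges receive two distinct fresh colours and $a,v_1,\dots,v_{k-3},w,b,a$ is a rainbow $C_k$. This already gives a bound of the shape $1+\binom{k-2}{2}+O(k)$; shrinking the closing phase enough to bring the total down to the claimed $\binom{k}{2}+1$ — for instance by balancing how much of the cycle is produced by the ``core'' path versus the closing gadget, or by recycling the discarded auxiliary vertices — is the delicate point here.
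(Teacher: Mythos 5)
Your reduction of the $K_n$ bound to the general one and your $P_k$ argument are fine; the path strategy (pendant retries at the endpoint, each failure permanently blocking one old colour there) gives $\binom{k}{2}+1$ and is essentially the specialization of the paper's strategy. The genuine gap is exactly where you flag it: the central claim $AR_o(H)\le k^2$ is not established by your sketch. Your edge-by-edge scheme breaks on back edges for a structural reason: once Builder presents the edge $\tilde x\tilde y$ it can never be presented again (graphs are simple), so its colour is fixed forever; rebuilding the subtree dangling from the earlier clashing edge $f$ does not in general forbid any colour at $\tilde x$ or $\tilde y$ (the clashing $f$ need not be incident to them), and Painter can re-create clashes inside each rebuilt portion, forcing cascading rebuilds. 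Even taking your own estimate at face value -- $O(i)$ rebuilds of size $O(i)$ for the $i$-th edge -- the total is $O(k^3)$ (for $K_n$ this is order $n^6$, far above the claimed $n^4/4$), not $k^2$. Likewise your $C_k$ gadget: shortening the core path saves only about $2k$ edges, while the closing phase as described can cost up to roughly $3k$ edges (each of the $\approx 2k$ old colours can kill one attempt, and attempts that die on the second edge cost two edges), so the count overshoots $\binom{k}{2}+1$ by a linear term, as you concede.

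The paper closes both gaps with one idea that your proposal lacks: it never presents an edge whose two endpoints are both already pinned. Builder orders the vertices of $H$ so that each $v_{i+1}$ maximizes its back-degree $d_{i+1}$ into $\{v_1,\dots,v_i\}$, and builds an induced rainbow copy vertex by vertex: in round $i$ she takes a brand-new vertex and presents its edges to the images of $N(v_i)\cap\{v_1,\dots,v_{i-1}\}$ one at a time, discarding the whole vertex the moment Painter uses a colour of the current copy and starting over with another fresh vertex. Thus every presented edge has a fresh endpoint, so unlimited retries are possible, and the cost is controlled globally rather than per retry: each discarded vertex carries exactly one old-coloured edge; by properness a fixed copy vertex $w_i$ meets at most $k_i-\deg_H(v_i)+1$ old-coloured edges (where $k_i$ is the number of copy edges at $w_i$'s last active round); and each discarded vertex has at most $d_{i+1}$ edges by the choice of ordering. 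This gives $AR_o(H)\le k+\sum_i\bigl(k_i-\deg_H(v_i)+1\bigr)d_{i+1}\le k^2$, and plugging the path and cycle parameters into this same inequality yields $\binom{k}{2}+1$ for both $P_k$ and $C_k$, with no separate closing gadget. To repair your write-up you would need either to switch to such a vertex-by-vertex strategy with a global accounting, or to exhibit an edge order and rebuild discipline whose total cost provably stays at $k^2$; the latter is precisely what your sketch does not supply.
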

\begin{proof}
 Let $H$ be an graph on $k$ edges. Let the vertices be ordered $v_1,\ldots,v_n$ such that given 
  $v_1, \ldots, v_i$, the next vertex $v_{i+1}$ is chosen from $V \setminus \{v_1,\ldots,v_i\}$ such
   that it has the largest number, $d_{i+1}$, of neighbors in $\{v_1,\ldots,v_i\}$. 
   We define Builder's strategy in rounds. After round $i$ the so far presented graph $G_i$ shall consist of a rainbow 
   induced copy of $H[v_1,\ldots,v_i]$, for $i=1,\ldots,n$, and perhaps additional vertices adjacent to some vertices of that
    copy and inducing an independent set. In round $1$ Builder just presents a single vertex $w_1$, i.e., $G_1 = \{w_1\} \cong H[v_1]$. 
    For $i \geq 2$ round $i$ is defined as follows.
 \begin{enumerate}[label=\arabic*.)]
  \item Let $G$ be the so far presented graph and $G_{i-1}= G[w_1,\ldots,w_{i-1}]$ be the rainbow copy of $H[v_1,\ldots,v_{i-1}]$ in $G$, 
  with $w_i$ corresponding to $v_i$. Let $W_i$ be the set of vertices in $\{w_1, \ldots, w_{i-1}\}$ corresponding to the neighborhood of $v_i$.
 
  \item Builder presents a new vertex $x$ together with the edges $x w$, $ w\in W_i$ presenting $xw_j$ before $xw_{j'}$ for $j<j'$ one by one
   as long as Painter does not use the colors used in $G_{i-1}$. As soon as Painter uses a color already present in $G_{i-1}$, 
   Builder repeats the previous procedure of introducing a new vertex and edges from it to $W_i$. If Painter does not use a 
   color present in $G_{i-1}$ on all the edges from a new vertex to $W_{i}$, call this new vertex $w_i$ and observe that $w_1,\ldots,w_i$
    induce a rainbow copy of $H[v_1,\ldots,v_i]$ with $w_i$ corresponding to $v_i$. This finishes $i$-th round.
 \end{enumerate}
 
 Let $G$ be the final graph obtained after the $n$-th round and containing an induced rainbow copy of $H$ on the 
 vertex set $W=\{w_1, \ldots, w_n\}$. Note that $X = V(G) \setminus W$ induces an independent set and sends edges to $W$. 
 Let the colors used on the edges of this rainbow copy of $H$ be called \emph{old}, and all other colors be called \emph{new}. 
 Recall that $v_i$ has $d_i$ neighbors in $\{v_1,\ldots,v_{i-1}\}$, i.e., $d_i=|W_i|$. Note that, with 
 $d_1=0$, we obtain $d_1+ \cdots + d_n = k$.
 We need one more parameter:   $k_i$ is the number of edges in a subgraph induced by $W$ at a last round 
 involving edges incident to $v_i$.    Formally, $k_i= |E(H[v_1,\ldots,v_j])|$, where   $j$  is the largest index s.t.  $v_iv_{j+1}\in E(H)$.
 
 \begin{claim*}
  The edges between $X$ and $W$ can be decomposed into $|W \setminus w_n| = n-1$ graphs, where each such graph consists of its center $w_i$,
  all edges with old color incident to $w_i$ (call these endpoints old neighbors of $w_i$) and all other edges incident to the old neighbors of $w_i$. 
  Moreover, the size of  such a graph with center $w_i$ is  at most $d_{i+1}(k_i-\operatorname{deg}_H(v_i) + 1)$.
 \end{claim*}
 
 \begin{figure}[htb]
  \centering
  \includegraphics{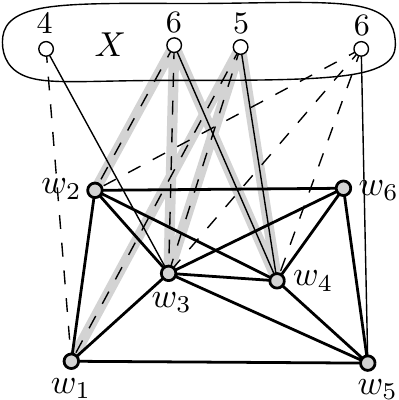}
  \caption{An example graph $G$ after the last round. Edges between $X$ and $W = \{w_1,\ldots,w_6\}$ are drawn dashed if they are new and solid if they are old. The vertices in $X$ are labeled with the number of the round in which they have been introduced. The graph with center $w_4$ is highlighted in gray.}
  \label{fig:general-online-UB}
 \end{figure}
 
 We refer to Figure~\ref{fig:general-online-UB} for an illustration of these graphs. To prove the claim, observe that  each $x\in X$ sends exactly one edge of old color to $W$. Each $w_i\in W$ sends at most $k_i-\operatorname{deg}_H(v_i) + 1$ 
 edges of old colors to $X$. Indeed, when $w_i$ gets its last incident edge in round $j+1$ the total number of old colors is $k_i$, and $\operatorname{deg}_H(v_i) - 1$ 
 of those are used on edges incident to $w_i$ and its neighbors in $\{w_1,\ldots,w_j\}$.
 Moreover, if a vertex $w_i$ is adjacent to a vertex $x\in X$ via an edge of old color, it means that $x$ was introduced at a step $j$, $j\geq i+1$ of the algorithm. Then $x$ is adjacent exactly to the copies of $\{v_1,\ldots,v_i\} \cap N(v_j)$ in $W$. Since by our choice of the ordering of $v_i$s, $|\{v_1,\ldots,v_i\} \cap N(v_j)|\leq |\{v_1,\ldots,v_i\} \cap N(v_{i+1})| = d_{i+1}$, we have that $x$ sends at most $d_{i+1} -1$ edges with new colors to $W$.
 This proves the claim.
 
 \smallskip

 Now, we bound the number of edges in $G$ by the total number of edges in the double stars described above and the number of edges in $H$.
 Here, use the fact $H$ has no isolated vertices,    $k_i \leq k-1$ for all $i=1,\ldots,n-1$, and $\sum_{i=1}^n d_i = k$.
 \begin{equation}
  AR_o(H) \leq k + \sum_{i=1}^{n-1}(k_i-\deg(v_i)+1)d_{i+1} \label{eq:onlineUB}
 \end{equation}
 $$
  \leq k+ \sum_{i=1}^{n-1}(k-1)d_{i+1}
  = k+ (k-1)k
  =k^2.$$
 This  proves the first part of the theorem.
 
 \smallskip
 
 For the special case of $H = P_k$ we take as $v_1$ one of the endpoints of the path, which gives the natural vertex ordering $v_1,\ldots,v_{k+1}$ along the path. Then one easily sees that $d_i = 1$ for each $i =2,\ldots,k+1$ and $k_i = i-1$ for each $i = 1,\ldots,k$. Plugging into~\eqref{eq:onlineUB} we obtain
 \begin{align*}
  AR_o(P_k) &\leq   k + \sum_{i=1}^{k}i - \sum_{i=1}^{k}\deg(v_i) = k + \binom{k+1}{2} - (2k-1) = \binom{k}{2} + 1.
 \end{align*}
  
 Similarly, when $H$ is the cycle $C_k$, we have $d_i = 1$ for $i = 2,\ldots,k-1$, $d_k=2$ and $k_i = i-1$ for each $i=1,\ldots,k-1$, and thus obtain
 \begin{align*}
  AR_o(C_k) & \leq  k + \sum_{i=1}^{k-1}i + (k-1) - \sum_{i=1}^{k-1}\deg(v_i) - 2
   = 2k - 3  + \binom{k}{2} - 2(k-2)
   = \binom{k}{2} + 1.
 \end{align*}
 
Applying the bound $AR_o(H) \leq k^2$ in the first part of the theorem  to $H = K_n$ we immediately  get the desired upper bound $AR_o(K_n)\leq n^4/4$.
\end{proof}

We remark that a more careful analysis for $H = K_n$ using the inequality~\eqref{eq:onlineUB} as we did it for $H = P_k$ and $H = C_k$ in the proof of Theorem~\ref{thm:online-UB} gives asymptotically the same bound as the one above.  When $H$ is the matching $M_k$ then a straightforward  application of Theorem~\ref{thm:online-UB} gives only $AR_o(M_k) \leq k^2$, while from inequality~\eqref{eq:onlineUB} we get $AR_o(M_k) \leq \binom{k+1}{2}$. However, we also get this bound from Theorem~\ref{thm:inequalities} and Theorem~\ref{thm:sizePathCycle} as follows: $AR_o(M_k) \leq AR_s(M_k) = \binom{k+1}{2}$.

\smallskip

Let us define $AR_o^*(H)$ just like $AR_o(H)$ but with the difference that Builder is allowed to present parallel edges. Note that from Theorem~\ref{thm:sizePathCycle} and Theorem~\ref{thm:multigraphs} follows that $AR_s^*(M_k) = AR_s(M_k)$, i.e., allowing multiple edges does not help here. However, in the online setting we get a better upper bound.  

\begin{theorem}
 For all $k \geq 1$, $AR_o^*(M_k)\leq \frac{k^2}{3} + \mathcal{O}(k)$.
\end{theorem}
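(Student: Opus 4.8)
The plan is to describe an adaptive strategy for Builder, using parallel edges, that forces a rainbow $M_k$ within $\frac{k^2}{3}+\mathcal{O}(k)$ moves. The atomic gadget is a \emph{bundle}: a set of parallel edges on a fresh pair of vertices. Since all edges of a bundle are pairwise adjacent, a proper coloring must give them pairwise distinct colors, so a bundle of size $s$ presents a menu of $s$ distinct colors, exactly one of which can enter a matching. As a warm-up, replaying the offline disjoint-bundle construction behind $AR_s^*(M_k)=\binom{k+1}{2}$ (build bundles of sizes $1,\dots,k$ and pick representatives greedily) already gives $AR_o^*(M_k)\le\binom{k+1}{2}$; the whole point is that online play lets Builder react to Painter's colors and save a constant factor. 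Note also that, by the remarks in the text, allowing multiple edges does \emph{not} help offline ($AR_s^*(M_k)=AR_s(M_k)$), so the improvement must come specifically from the interaction of adaptivity with parallel bundles.

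First I would fix the bookkeeping. Builder maintains a partial rainbow matching $R$ — pairwise vertex-disjoint edges with distinct colors — together with its color set $C$, and he grows $R$ by opening a fresh bundle and feeding it parallel edges one at a time until Painter is forced to use a color outside $C$; that edge is appended to $R$. The key device is \emph{recycling}: when Painter answers a bundle edge with a color $c\in C$ that already occurs on some $e_p\in R$, Builder does not discard this edge. Instead he treats it as progress toward replacing the representative of $e_p$ and continues on the bundle currently carrying $c$, swapping the two roles once that bundle escapes $C\setminus\{c\}$; chaining such swaps, the edges Painter ``wastes'' by repeating old colors are charged against later growth steps rather than lost outright. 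I would then run an amortized (potential-function) accounting over the whole game: properness is automatic since every bundle lives on its own fresh vertices, so the only structural invariant to check is that the representatives of $R$ stay pairwise vertex-disjoint after each swap, and the counting then shows that the amortized cost of the $i$-th committed edge is about $\tfrac23 i$ instead of $i$. Summing gives a total of $\sum_i \tfrac23 i + \mathcal{O}(k)=\tfrac23\binom{k+1}{2}+\mathcal{O}(k)=\frac{k^2}{3}+\mathcal{O}(k)$.

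The main obstacle is making the recycling quantitatively precise: one has to show that Painter cannot lure Builder into long unproductive swap-chains — equivalently, that each unit of ``wasted'' effort (a repeated color) can be pre-paid by only a bounded amount of future work, and that a chain of repeats must terminate in a genuinely new color after a controlled number of steps — and then extract the constant $\tfrac13$ from the accounting while absorbing parity issues, chain overheads, and small-$k$ cases into the $\mathcal{O}(k)$ term. I expect the chain-length/charging bound to be the delicate step; once it is in place, the remaining work (summing the amortized costs and checking the disjointness invariant) is routine.
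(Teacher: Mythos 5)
There is a genuine gap, and I believe the mechanism you propose cannot be repaired as stated. Your ``recycling'' step extracts no value from a repeated color: if Painter answers an edge $f$ of your current bundle with a color $c$ already on some $e_p\in R$, then $f$ is parallel (hence adjacent) to every other edge of its bundle and carries a color already present in $R$, so $f$ can never coexist in a rainbow matching with a second edge of its own bundle, nor can it join $R$. Re-designating which of the two $c$-colored edges is ``the'' representative of $c$ is a relabelling that changes neither $|R|$ nor $C$, and growing the old bundle $B_p$ further is useless because any forced new color there sits on an edge adjacent to $e_p$. Since every one of your bundles lives on a fresh vertex pair, each starts from degree zero, and Painter can stonewall the $i$-th growth attempt with $i-1$ old colors before being forced; nothing in the swap-chain prevents this, so the honest cost per committed edge stays at $i$, not $\tfrac23 i$. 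The amortized bound you announce is exactly the content of the theorem and is neither derived nor, I think, derivable from this charging scheme. You also flag the chain-termination/charging bound as an open obstacle, so even on its own terms the argument is a plan rather than a proof.

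The missing idea is to let a \emph{vertex}, not a color, carry the saved work across phases. The paper's Builder maintains, besides the rainbow matching $F_{i-1}$, a reserve vertex $v_{i-1}$ of degree $\lceil\frac{i-1}{3}\rceil$ that is disjoint from $F_{i-1}$ and whose new-colored incident edges are non-adjacent to $F_{i-1}$. In phase $i$ she spends only $2\lceil\frac{i}{3}\rceil$ edges: a bundle from $v_{i-1}$ to a fresh vertex, and then either a bundle on two fresh vertices (if Painter already slipped and gave $v_{i-1}$ a new color) or a second bundle at $v_{i-1}$. In the stonewalling case all colors at $v_{i-1}$ are old, and since the coloring is proper the $\lceil\frac{i-1}{3}\rceil+2\lceil\frac{i}{3}\rceil\ge i$ edges at $v_{i-1}$ need $i$ distinct colors while only $i-1$ old colors exist, so a new color is forced; either way a fresh reserve vertex of degree $\lceil\frac{i}{3}\rceil$ is created for the next phase. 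Summing $2\lceil\frac{i}{3}\rceil$ over $i=1,\dots,k$ gives $\frac{k^2}{3}+\mathcal{O}(k)$. So the constant $\tfrac13$ comes from accumulated degree at a reused endpoint, which your all-fresh-pairs setup forecloses.
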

\begin{proof}
   We present edges in $k$ phases, introducing at most $2\lceil\frac{i}{3}\rceil$ edges in phase $i$, for each $i=1,\ldots,k$ such that 
   for any proper coloring of Painter and  every $i=1,\ldots,k$, the following holds.
 At each stage we identify a pair $(F_i, v_i)$, where 
 \begin{itemize}
  \item $F_i$ is a rainbow matching on $i$ edges
  \item $v_i$ is a vertex of degree $\lceil\frac{i}{3}\rceil$ 
  \item $v_i$   not incident to any edge in $F_i$
  \item each edge  incident to  $v_i$ whose color is not already in $F_i$ is  not adjacent to any edge in $F_i$.
 \end{itemize}
  
 In phase $1$ present two independent edges $e,e'$, let  $F_1 = \{e\}$ and  $v_1= e'$. 
 For $i\geq 2$ we call colors of edges in $F_i$ old colors and all other colors new colors. Introduce two bundles of $\lceil\frac{i}{3}\rceil$ parallel edges each, one after another. 
 As endpoints of the first bundle we choose $v_{i-1}$ and a new vertex $v$.   First assume that Painter colors in such a way that some edge $e$ at $v_{i-1}$ has a new color. 
 Then we choose the endpoints of the second bundle to be two new vertices $u,w$ and let $F_i = F_{i-1} \cup e$ and $v_i = u$.
 Otherwise, each edge at $v_{i-1}$ has an old color. Then we choose the endpoints of the second bundle to be $v_{i-1}$ and a  new vertex $z$. Since $v_{i-1}$ now has degree $\lceil\frac{i-1}{3}\rceil + 2\lceil\frac{i}{3}\rceil \geq i$ at least one edge $e$ of the second bundle receives a new color.  Set $F_i = F_{i-1} \cup e$ and $v_i =v$.
  
 Since in each phase we introduce $2\lceil\frac{i}{3}\rceil$ edges we get   $AR_o^*(M_k) \leq \sum_{i=1}^k 2\left\lceil\frac{i}{3}\right\rceil  \leq   \frac{k^2}{3} + \mathcal{O}(k).$
\end{proof}

\section{Concluding Remarks and Open Questions.}

This paper investigates three anti-Ramsey type functions:  $AR_{FF}(H)$, $AR_o(H)$ and $AR_s(H)$.
Although these functions might be identical for some graphs, such as stars, in general these are distinct 
functions.  For example,   $AR_{FF}$ and $AR_o$ differ from  $AR_s$ already in their order of magnitude in case of $K_n$. 
For  paths or matchings, all three numbers  have the same order of magnitude.

 In classical size Ramsey theory, in particular in an argument attributed to Chv\'atal, see ~\cite{EFRS}, it is claimed that if $G$ is a graph with the smallest number of edges such that any $2$-coloring contains a monochromatic copy of $K_n$ then $G$ must be a complete graph.  So one might think that in order to force a rainbow clique, i.e., taking $H = K_n$, it is most efficient to color a large clique, i.e., $G = K_N$, instead of some other graph $G$.  That would imply that $AR_s(K_n) \leq \binom{AR_{loc}(K_n)}{2}$.   We prove that this intuition and analogue of size-Ramsey theorem does not hold already for $K_4$.
 The following general question remains:

\begin{prob}
 By how much do  $AR_s(K_n)$ and $\binom{AR_{loc}(K_n)}{2}$  differ as $n$ goes to infinity?
\end{prob} 

We believe that the inequality $AR_s(H) \leq \binom{AR_{loc}(H)}{2}$ could  be asymptotically tight for $H = K_n$ and for some sparse graphs $H$.  When $H$ is the path $P_k$, then the best-known upper bound for $AR_s(P_k)$ is given by  the best-known upper bound for $AR_{loc}(P_k)$.   Next is: 

\begin{prob}
 Improve the upper bound for $AR_s(P_k)$.
\end{prob}

We proved that  $AR_{FF}(H)$ is very closely related to a min-max quantity with respect to matchings in $H$. We  derived an upper bound  $AR_{FF}(H) \leq (\tau+1)k$, which we suspect could be improved by the factor of $2$. 

\begin{prob}
 Is it true that for every graph $H$,  $AR_{FF}(H) \leq (\tau+1)k/2$?
\end{prob}

For  all our lower bounds for $AR_o$ Painter uses the First-Fit strategy.
This gives rise to the following:

\begin{prob}
 Find a class of graphs for which $AR_o$ and $AR_{FF}$ have different asymptotic behavior.
\end{prob}

The theme of this paper was to efficiently force a rainbow copy of a specific graph $H$ in every proper coloring of a constructed graph $G$. We have measured efficiency by the number of edges required in $G$ and have considered this both in the online and offline setting. Let us mention another concept of efficiency that might be interesting to study: For given graph $H$ what is the \emph{least maximum degree} of a graph $G$ with $G \to H$, i.e., where every proper edge-coloring of $G$ contains a rainbow copy of $H$? Clearly, the maximum degree of $G$ must be at least $|E(H)|-1$ since otherwise $G$ can be properly colored with less than $|E(H)|$ colors and hence does not contain a rainbow copy of $H$.

There is also an online variant of this question, which is analogous to the online anti-Ramsey numbers we defined here. One can show that Builder can force a rainbow matching on $k$ edges even if the graph she presents has maximum degree at most $\lceil (k+1)/2 \rceil$ and that no rainbow copy of any $k$-edge graph $H$ can be forced by Builder if she is restricted to presenting a graph of maximum degree strictly less than $\lceil (k+1)/2 \rceil$.

\begin{prob}
 How large rainbow paths can Builder force, when she presents a graph with bounded maximum degree?
\end{prob}

We remark that such questions are closely related to the so-called \emph{restricted setting} considered by Pra{\l}at~\cite{P} and Grytczuk, Ha{\l}uszczak and Kierstead~\cite{GHK}.

\bibliography{antir}
\bibliographystyle{plain}

\end{document}